\newcommand{\ds}{\displaystyle}
\newcommand{\cA}{{\mathcal{A}}}
\newcommand{\cE}{{\mathcal{E}}}
\newcommand{\N}{\mathbb{N}}
\newcommand{\cB}{\mathcal{B}}
\newcommand{\bu}{\mathbf u}
\newcommand{\bv}{\mathbf v}
\newcommand{\bF}{\mathbf F}
\newcommand{\bV}{\mathbf V}
\newcommand{\grad}{\nabla}
\theoremstyle{plain}
\newtheorem{theorem}{Theorem}[section]
\newtheorem{lemma}[theorem]{Lemma}
\theoremstyle{remark}
\newtheorem{definition}{Definition}
\newtheorem{remark}{Remark}[section]
\numberwithin{equation}{section}
\numberwithin{theorem}{section}
\numberwithin{remark}{section}
\numberwithin{assumption}{section}
\numberwithin{condition}{section}
\title{Mathematical Effects of Linear Visco-elasticity \\[.1cm]  in Quasi-static Biot Models}
\author{ Lorena Bociu\footnote{2311 Stinson Dr., North Carolina State University, Raleigh, NC, 27607; {\em lvbociu@ncstate.edu}} \hskip 2cm Boris Muha\footnote{University of Zagreb, Faculty of Science, Croatia;~ {\em borism@math.hr}}  \hskip 2cm Justin T. Webster\footnote{1000 Hilltop Dr., University of Maryland, Baltimore County, Baltimore, MD, 21250;~ {\em websterj@umbc.edu}} }
\begin{document}

\maketitle
\begin{abstract}
\noindent We investigate and clarify the mathematical properties of linear poro-elastic systems {in the presence of classical (linear, Kelvin-Voigt) visco-elasticity}. In particular, we quantify the time-regularizing and dissipative effects of visco-elasticity in the context of the quasi-static Biot equations. The full, coupled pressure-displacement presentation of the system is utilized, as well as the  framework of implicit, degenerate evolution equations, to demonstrate such effects and characterize linear {\em poro-visco-elastic systems}. We consider a simple presentation of the  dynamics (with convenient boundary conditions, etc.) for clarity in exposition across several relevant parameter ranges. Clear well-posedness results are provided, with associated a priori estimates on the solutions. In addition, precise statements of admissible initial conditions in each scenario are given. 
\noindent 
\vskip.2cm

\noindent Keywords: {poroelasticity, implicit evolution equations, strong damping, viscoelasticity}
\vskip.2cm
\noindent
{\em 2010 AMS}: 74F10, 76S05, 35M13, 35A01, 35B65, 35Q86, 35Q92
\vskip.2cm
\noindent Acknowledged Support: Author 1 was supported by NSF-DMS 1555062 (CAREER). Author 2 was supported by the Croatian Science Foundation project IP-2019-04-1140. Author 3 was partially supported by NSF-DMS 1907620. 
\end{abstract}

\section{Introduction} \noindent
In the past 10 years, there has been an intense growth of work in theoretical and numerical studies invoking  equations of poro-elasticity \cite{both2,banks2, bmw,cao2,showrecent, numerical1,mikelic,MRT,rohan,applied2,applied3,numerical2} (to name just a few).  While the initial development of the mathematical theory of poro-elasticity was driven by geophysical applications \cite{Terzaghi, Biot, detournay, show1,applied, wheel}, some of the recent interest in this field seems due to the fact that deformable porous media models describe biological tissues; these include organs, cartilages, bones and engineered tissue scaffolds \cite{bcmw, bgsw, MBE2,bociuno,nia,ozkaia,GGbook,biobiot}.
The mechanics of biological tissues typically exhibit {\em both elastic and visco-elastic behaviors}, resulting from the combined action of elastin and collagen \cite{mow, nia, ozkaia}. These effects can change over time, and the loss of tissue visco-elasticity is relevant to the study of several age-related diseases such as: glaucoma, atherosclerosis, and Alzheimer's disease \cite{GGbook}. {Several mathematically-oriented} studies have invoked or utilized {visco-elastic effects} in the dynamics, owing to their analytically and numerically regularizing and dissipative properties, e.g.,  \cite{bgsw,both4,rohan}. Thus, there is both mathematically-driven and application-driven motivation to consider a comprehensive mathematical investigation of poro-visco-elastic systems. Poro-visco-elastic solids were considered by Biot himself in \cite{Biotporovisco}; the seminal poro-elasticity reference \cite{coussy} contains a discussion of the modeling of poro-visco-elastic solids, and we also point to the references \cite{EveVernescu91,Sanchzez-Palencia} in this regard.

The field of visco-elastic solids is vast, but we note that for purely hyperbolic-like dynamics (bulk, plate/shell, or beam elasticity), the effects of classical, linear visco-elasticity are well-understood and fully characterized at the abstract level \cite{redbook, trig2, trig3}. And, for classical Biot-type systems of porous-elastic dynamics, the abstract theory of solutions is established, with clear estimates on solutions (and we recap this in detail below). Yet, {\em for linear poro-visco-elastic models, there seems to be no comprehensive presentation in the literature demarcating which parabolic behaviors are present, with clear estimates on solutions quantifying solution regularity and dissipation}. There have been recent numerical investigations into the effects of visco-elasticity in Biot type models involving the first author here \cite{MBE1,MBE2}, as well as studies where poro-visco-elastic systems are studied numerically \cite{rohan,gaspar,both3}. In this article, we investigate the general  linear quasi-static poro-visco-elastic system, and clarify the time-regularizing and dissipative effects of structural visco-elasticity; in particular, we map out the theory of partial differential equation (PDE) solutions for linear poro-visco-elastic systems.

In this work we focus on perhaps the simplest inclusion of visco-elasticity: Kelvin-Voigt type. This is precisely what was considered in \cite{bgsw}. Here, we  build on that mathematical framework, where weak solutions were constructed in a particular 3D case, and the subsequent 1D investigations of viscous effects \cite{MBE1,MBE2}.  In the present context, we provide clear well-posedness results with estimates, and a discussion of the construction of solutions (when illustrative). In addition, we make precise an appropriate notion of initial conditions in each relevant scenario. When it is appropriate, we relate the system abstractly to an associated semigroup framework \cite{pazy,redbook}. We do not attempt to describe or invoke more sophisticated or recent theories of visco-elasticity here. Indeed, there does not yet seem to be a comprehensive PDE theory of poro-visco-elasticity in the simple, linear case of Kelvin-Voigt structural viscosity.

The popular quasi-static formulation (neglecting elastic inertia) of Biot's equations is utilized here. On the other hand, the ``full" inertial Biot system is formally equivalent \cite{show1} to thermo-elasticity, which is well-studied \cite{redbook,oldthermo}. Foundational works for the PDE theory of linear poro-elasticity can be found in \cite{auriault,indiana, zenisek}, and culminating in the more modern works \cite{show1,showmono}. In this traditional framework, visco-elastic effects were considered in the displacement dynamics by invoking the so called {\em secondary consolidation} \cite{show1,applied, gaspar}, typical for studies of {\em clays}. More recently, {as described above,} a growing interest in biologically-based Biot systems can also be observed. In these bio-Biot models, linear visco-elastic effects can be incorporated into traditional linear Biot dynamics by taking into account the visco-elastic strain, and possibly adjusting the formula for the {\em local fluid content} accordingly (depending on the specific scenario considered, either focusing on incompressible or compressible constituents). We  address several parameter regimes of physical interest here. We do this in the spirit of the analysis of the well-known reference \cite{show1}; we also include some comments on the PDE effects of secondary consolidation (considered as a {\em partial visco-elasticity}) at the end of the work in Section \ref{secondary}. Although we focus on linear models with constant coefficients, recent applications---which inspired the consideration of models here---are in fact nonlinear or taken with time-dependent coefficients \cite{bw,bmw,bcmw,bgsw,mikelic}. The work here can be seen as a foundation for extending such considerations to the visco-elastic case; additionally, this work provides a clear and precise framework for researchers utilizing visco-elastic terms as model-regularizations, as is common in fluid-structure interactions, e.g. \cite{IKLT14,MT21,BorSun}. 

A main focus of this work is to introduce the appropriate constituent operators and spaces into the context visco-elastic dynamics, which have been used in abstractly describing the quasi-static Biot dynamics for some time \cite{bgsw,bw,cao2,show1,auriault,indiana}. Following this, we can ``reduce" or frame the poro-visco-elastic dynamics in the context of these operators to apply, when possible; to the knowledge of the authors, this has not been done. Interestingly, in some cases below, the abstract presentation of these systems reveals central features of poro-visco-elastic dynamics which are not immediately obvious in the full presentation of solutions. This is particularly true in considering which type of initial conditions are warranted for each configuration of interest, and connects the analysis thereof to appropriate ODE or semigroup frameworks.

\section{Quasi-static Poro-Visco-elastic Dynamics}\noindent
Let $\Omega \subset \mathbb R^n$ for $n=2,3$ be a bounded, smooth domain. For the dynamics, we follow a bulk of the notational conventions of \cite{show1}.
In the traditional fully-saturated Biot system we have a pressure equation and a momentum equation; these are given in the  variable $\bu$, describing the displacements of the solid matrix, and the homogenized {pore} pressure $p$. The pressure equation, resulting from mass balance, reads:
\begin{equation}\zeta_t-\nabla \cdot [K\nabla p]=S.\end{equation}
The quantity $\zeta$ is the {\em fluid-content}, and in the standard  Biot model of poro-elasticity it is given by 
\begin{equation} \zeta=c_0p+\alpha \nabla \cdot \bu.\end{equation}
The constant $c_0 \ge 0$ represents {\em compressibility of the constituents}, and will be considered in two regimes here: $c_0=0$ (incompressible constituents) and $c_0>0$ (compressible constituents).\footnote{We recall here that incompressibility of each component means that the volumetric deformation of the solid constituent corresponds to the variation of 
fluid volume per unit volume of
porous material, i.e., $\zeta =  \nabla \cdot \bu$.} The coupling constant $\alpha>0$ is referred to as the {\em Biot-Willis} constant, and, in the case of incompressible constituents $c_0=0$, we have $\alpha = 1$ \cite{detournay}. The quantity $K(\mathbf x,t)$ is the {\em permeability tensor} of the porous-elastic structure.  We present it generally here (as in \cite{bmw}), but {for the analysis below, we will take $k=const.$} This will provide clarity as we demonstrate the mathematical structures of poro-visco-elastic systems, and is in-line with the central mathematical references for the PDE analysis of Biot's equations, \cite{show1,auriault,indiana}. The fluid source function $S$ is permitted to depend on $\mathbf x$ and $t$.

The momentum equation {for the fluid-solid mixture} is given as an elliptic $(\mu,\lambda)$-Lam\'e system, as driven by the pressure gradient and a source $\bF$:
\begin{equation}-\mu\Delta \bu-(\lambda+\mu)\nabla \nabla \cdot \bu+\alpha\nabla p = \bF.\end{equation}
Below, we will consider the body force $\bF$ to be spatially and temporally dependent. 

For reference, we  recall that the primal, inertial form of the elasticity equation is \begin{equation} \label{inertial} \rho\bu_{tt}-\mu\Delta \bu-(\lambda+\mu)\nabla \nabla \cdot \bu+\nabla p = \bF.\end{equation} It is instructive to remember that the Biot dynamics begin here, and then take $\rho\bu_{tt} \approx 0$ to obtain the standard {\em quasi-static} equations of poro-elasticity \cite{show1,coussy}. 
 
 \subsection{Inclusion of Visco-elasticity: $\delta_1>0$}\noindent
In the most straight-forward manner, the incorporation of {visco-elasticity} may be achieved through the momentum equation.  We shall refer to this here as {\em full, linear visco-elasticity}, and follow the Kelvin-Voigt approach of including  strong (or ``structural") damping \cite{redbook,trig2,chenruss,trig3}. This entails adding a {\em strain rate} term to the global stress, including a ``strength" coefficient, $\delta_1 \ge 0$, which captures the viscous structural effects. Denoting the symmetric gradient (linearized strain) as $\varepsilon$ and standard linear elastic stress as $\sigma$ we have \begin{equation} \varepsilon(\bv) = \dfrac{1}{2}[\nabla \bv+\nabla \bv^T],~~~~\sigma(\bv) = 2\mu\varepsilon(\bv)+\lambda(\nabla\cdot \bv) I.\end{equation} This yields  the structural term 
\begin{equation}-\text{div}~\sigma(\bu+\delta_1\bu_t)=-[\mu\Delta+(\lambda+\mu)\nabla \text{div}](\bu + \delta_1\bu_t)\end{equation} in the system, where $\delta_1>0$ indicates the presence of visco-elasticity (i.e., $\delta_1=0$ represents the standard, elastic Biot dynamics). We note that the inclusion of this term is referred to as visco-elasticity, since the full inertial Biot dynamics in \eqref{inertial} would constitute a linear, visco-elastic system of elasticity (a {\em strongly damped} structural equation) with the inclusion of $\delta_1>0$. This straight-forward inclusion of viscous effects in the solid can be obtained as a limiting case of the poro-visco-elastic modelling in \cite{coussy}.
\begin{remark}
There are many ways to incorporate visco-elastic effects into the modeling of poro-elastic systems. See, for example, \cite{both3}, where a viscoelastic strain is considered in the case of compressible constituents; the other components of the system are also updated there, including the formula for the fluid content. In general, the field of visco-elasticity is broad, and we do not claim to be exhaustive. Here, our focus on the specific case of linear, quasi-static Biot in the presence of linear, Kelvin-Voigt structural viscosity. Other pertinent references for poro-visco-elastic systems are: \cite{Sanchzez-Palencia,EveVernescu91,MeiVernescu10, Meirm, Vist}. The aforementioned references include aspects of the homogenization theory, detailing when and how visco-elasticity can arise in solids, and, in some cases existence results for weak solutions.
\end{remark}
A central point here is that we permit the presence of visco-elasticity to affect the definition of the fluid content $\zeta$. In the established  reference \cite{coussy}, compressible constituents and viscous effects are considered, with modified  fluid content. There are two choices for $\zeta$ considered here, encapsulated by $\delta_2 = 0$ or $\delta_2>0$:
\begin{equation} \zeta = c_0p+ \alpha \nabla \cdot \bu + \delta_2\nabla \cdot \bu_t.\end{equation}
When $\delta_2=0$, this represents the standard Biot definition of the fluid content, which prevalently appears in the literature for linearized poro-elastic systems with and without visco-elastic effects, e.g., \cite{show1,bgsw,bsAWM}. {A derivation of the model, obtained by heterogeneous mixture approach, can be found in \cite[Section 12.2]{GGbook}, for instance}. 
In the present note, we take the approach of classifying the system and its solutions in two regimes: $\delta_2=0$ and $\delta_2>0$, noting that {the application of interest can inform which selection is made.}

To conclude this section, we re-state the linear poro-visco-elastic system as it is studied herein. We fix $\alpha, \lambda,\mu>0$, and consider the following with $\delta_1>0$:
\begin{equation}\label{PVEsystem}
\begin{cases}
-\text{div}~\sigma(\bu+\delta_1\bu_t) +\alpha\nabla p = \bF \\
[c_0p+ \alpha \nabla \cdot \bu + \delta_2\nabla \cdot \bu_t]_t  -\nabla \cdot [k\nabla p]=S, 
\end{cases}
\end{equation}
where we accommodate all possible regimes dependent on  $c_0  \geq 0$ and $\delta_2 \ge 0$. 
To \eqref{PVEsystem} we associate the following boundary conditions: 
\begin{equation}\label{PVEsystemBC}
\bu = 0, \ \text{and} \ \ k\nabla p \cdot \mathbf n  = 0 \ \ \text{on}\ \  \Gamma \equiv \partial \Omega,
\end{equation}
namely, homogeneous Dirichlet conditions for the displacement and homogeneous Neumann conditions for the pressure.

At this juncture, we suggest that the natural initial conditions to be specified are those quantities appearing under the time derivatives above, namely: $$\delta_1\bu(0)=\bu_0~~~\text{ and } ~~~[c_0p+\alpha \nabla \cdot \bu+\delta_2\nabla \cdot \bu_t](0)=d_0.$$ It is immediately clear that the regime of interest and the parameter values affect the relative independence of these quantities. In what follows, we will precisely specify the initial quantities, their relationships, and their spatial regularities, as each depend on the regime under consideration and the type of solution sought.

 Lastly, we mention a model of {\em partial visco-elasticity}, known as {\em secondary consolidation} (for certain soils, such as clays \cite{show1,applied,gaspar}), which has appeared in the literature. In this case, for $\lambda^*>0$, the displacement equation reads:
\begin{equation}
-(\mu+\lambda) \nabla \nabla \cdot \bu-\mu\Delta \bu-\lambda^*\nabla \nabla \cdot \bu_t+\alpha\nabla p = \bF
\end{equation}
We remark on secondary consolidation briefly in Section \ref{secondary} at the end of this note.

\subsection{Notation and Conventions} 
\noindent
\textbf{For the remainder of the paper, we consider $\alpha, \mu,\lambda>0$ to be fixed} and do not explicitly name them in theorems and subsequent discussions.

 The Sobolev space of order $s$ defined on a domain $D$ will be denoted by $ H^s(D)$, with $H^s_0(D)$ denoting the closure of test functions $C_0^{\infty}(D) := \mathcal D(D)$ in the standard $H^s(D)$ norm
(which we denote by $\|\cdot\|_{H^s(D)}$ or $\|\cdot\|_{s,D}$).  When $s =0 $ we may further abbreviate the notation
to $\| \cdot \|$ denoting $\|\cdot\|_{L^2(D)}$. Vector-valued spaces will be denoted as $\mathbf L^2(\Omega) \equiv [L^2(\Omega)]^n$ and $\mathbf H^s(\Omega) = [H^s(\Omega)]^n$. We make use of the standard notation for the trace of a function $w$ as $\gamma[w]$, namely $\gamma[\cdot]$ as the map from $H^1(D)$ to $H^{1/2}(\partial D)$. We will make use of the spaces $L^2(0,T;U)$ and $H^s(0,T;U)$, when $U$ is a Hilbert space. Associated norms (and inner products) will be denoted with the appropriate subscript, e.g., $||\cdot||_{L^2(0,T;U)}$, though we will simply denote $L^2$-inner products by $(\cdot,\cdot)$ when the context is clear. For estimates on solutions, we utilize the notation that $A \lesssim B$ means there exists a constant $c>0$ not depending on critical constants (made clear by context) so that $A \le cB$.

\subsection{Operators, Spaces, and Solutions}\label{opsec}
Let   $\bV=\mathbf H_0^1(\Omega)$ and $V=H^1(\Omega)\cap L^2_0(\Omega)$.\footnote{$L^2_0(\Omega) \equiv \{ f \in L^2(\Omega)~:~\int_{\Omega} f = 0\}$ which is  topologically isomorphic to $L^2(\Omega)/\mathbb R$.} We will topologize $V$ through the inner-product
$$a(p,q) = (k\nabla p,\nabla q)_{L^2(\Omega)},$$ which gives rise to the
 gradient norm on $V$; by  the Poincar\'e-Wirtinger inequality,  the norm
$$||\cdot||_V = ||k^{1/2}\nabla \cdot||,$$
is equivalent to the standard $H^1(\Omega)$. Through Korn's inequality, as well as Poincar\'e's inequality, we may topologize $\bV$ through the bilinear form:
$$e(\bu,\bv) = (\sigma(\bu),\varepsilon(\bv))_{\mathbf L^2(\Omega)},$$
with $\sigma,~\varepsilon$ defined as above, leading to the $\mathbf H^1(\Omega)$-equivalent norm
$$||\cdot||_{\mathbf V} = e(\cdot,\cdot)^{1/2}.$$

We define two linear differential operator associated to the bilinear forms $e(\cdot,\cdot)$ and $a(\cdot,\cdot)$, with (respectively) actions given by \begin{equation}\label{operators} \cE\bu = -\mu\Delta \bu - (\lambda+\mu) \nabla \nabla \cdot \bu;~~~~ Ap = -\nabla \cdot[k\nabla p]=-k\Delta p.\end{equation} Invoking the smoothness of the domain $\Omega$ (and standard elliptic regularity), we characterize the domain $$\mathcal D(\mathcal E) \equiv \mathbf H^2(\Omega)\cap \bV,$$ which corresponds to homogeneous Dirichlet conditions for the elastic displacements. Similarly, we take $$\mathcal D(A) \equiv \{ p \in H^2(\Omega)\cap L^2_0(\Omega)~:~\gamma[\nabla p\cdot \mathbf n]=0\}.$$ Here, $\cE$ realizes an isomorphism in two contexts: $\mathcal D(\cE) \to L^2(\Omega)$ and $\bV \to \bV'$, where $\cE^{-1}$ is interpreted respectively (i.e., through its natural coercive bilinear form $e(\cdot,\cdot)$) \cite{show1}. 
Similarly, $A:\mathcal D(A) \to L^2(\Omega)$ or $V\to V'$ is an isomorphism. See \cite{bmw} for more discussion.

Lastly, we define the nonlocal, zeroth order {\em pressure-to-dilation} mapping as follows: \begin{equation} B\equiv -\nabla \cdot \cE^{-1}\nabla. \end{equation} As a mapping on $L^2(\Omega)$, $B$ is central to many abstract analyses of Biot dynamics \cite{bw,bgsw,cao2}. We state its relevant properties as a lemma coming from \cite{bmw} in the specific context of $L^2_0(\Omega)$ and $V = H^1(\Omega)\cap L^2_0(\Omega)$:
 \begin{lemma}\label{Binvert}
The operator $B \in \mathscr L(L^2_0(\Omega)) \cap \mathscr L (V)$. $B$ is an isomorphism on $L^2_0(\Omega)$ and is injective on $V$.
Finally, we have that $B$ is a self-adjoint, monotone operator when considered on $L^2_0(\Omega)$.
\end{lemma}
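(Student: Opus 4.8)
The plan is to realize $B$ through a variational (bilinear-form) representation and then read off each assertion; the only genuinely nontrivial input will be an inf-sup (surjectivity-of-the-divergence) property. First I would fix the precise meaning of $Bp$ for $p\in L^2_0(\Omega)$: since $\grad\colon L^2(\Omega)\to\bV'$ is bounded (being, up to sign, the adjoint of $\grad\cdot\colon\bV\to L^2(\Omega)$) and $\cE\colon\bV\to\bV'$ is an isomorphism, the element $\bw_p:=\cE^{-1}\grad p\in\bV=\mathbf H^1_0(\Omega)$ is the unique solution of
\begin{equation}\label{e:Bdef}
e(\bw_p,\bv)=\lb\grad p,\bv\rb=-(p,\grad\cdot\bv)_{L^2(\Omega)},\qquad \bv\in\bV,
\end{equation}
and $Bp:=-\grad\cdot\bw_p\in L^2(\Omega)$. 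One checks $Bp\in L^2_0(\Omega)$ because $\int_\Omega\grad\cdot\bw_p=\int_\Gamma\bw_p\cdot\mathbf n=0$, as $\bw_p$ has zero trace. Boundedness $B\in\mathscr L(L^2_0(\Omega))$ is then immediate, being a composition $p\mapsto\grad p\mapsto\bw_p\mapsto-\grad\cdot\bw_p$ of bounded maps together with the isomorphism statement for $\cE$.

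Next comes the key identity: for $p,q\in L^2_0(\Omega)$, testing \eqref{e:Bdef} (written for $q$) with $\bv=\bw_p$ gives
\begin{equation*}
(Bp,q)_{L^2(\Omega)}=-(\grad\cdot\bw_p,q)=-(q,\grad\cdot\bw_p)=e(\bw_q,\bw_p).
\end{equation*}
Symmetry of $e(\cdot,\cdot)$ then yields $(Bp,q)=e(\bw_p,\bw_q)=(p,Bq)$, so the bounded, everywhere-defined operator $B$ is self-adjoint on $L^2_0(\Omega)$; taking $q=p$ gives $(Bp,p)=\|\bw_p\|_{\bV}^2\ge 0$, i.e.\ monotonicity. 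The same identity gives injectivity on $L^2_0(\Omega)$ (a fortiori on $V$): if $Bp=0$ then $\|\bw_p\|_{\bV}=0$, hence $\grad p=\cE\bw_p=0$, hence $p=0$ in $L^2_0(\Omega)$.

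For the isomorphism statement it remains --- $B$ being bounded, self-adjoint and monotone --- to establish a coercive lower bound $(Bp,p)\gtrsim\|p\|_{L^2(\Omega)}^2$. From the key identity $(Bp,p)=\|\bw_p\|_{\bV}^2$, and by duality
\begin{equation*}
\|\bw_p\|_{\bV}=\sup_{\bv\in\bV\setminus\{0\}}\frac{e(\bw_p,\bv)}{\|\bv\|_{\bV}}=\sup_{\bv\in\bV\setminus\{0\}}\frac{-(p,\grad\cdot\bv)}{\|\bv\|_{\bV}}.
\end{equation*}
Here I invoke the classical surjectivity of $\grad\cdot\colon\mathbf H^1_0(\Omega)\to L^2_0(\Omega)$ (the Ne\v{c}as--Bogovski\u{\i} inf-sup inequality, valid on the bounded smooth domain $\Omega$): there is $\bv_p\in\bV$ with $\grad\cdot\bv_p=-p$ and $\|\bv_p\|_{\mathbf H^1(\Omega)}\lesssim\|p\|_{L^2(\Omega)}$. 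Inserting $\bv_p$ above gives $\|\bw_p\|_{\bV}\ge\|p\|_{L^2(\Omega)}^2/\|\bv_p\|_{\bV}\gtrsim\|p\|_{L^2(\Omega)}$, whence $(Bp,p)\gtrsim\|p\|_{L^2(\Omega)}^2$; Lax--Milgram applied to the bounded coercive form $(p,q)\mapsto(Bp,q)$ (together with injectivity) then shows $B^{-1}\in\mathscr L(L^2_0(\Omega))$. I expect this inf-sup step to be the main obstacle; everything else is bookkeeping with the isomorphism properties of $\cE$ recalled above and integration by parts.

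Finally, for $B\in\mathscr L(V)$ I would use elliptic regularity: if $p\in V\subset H^1(\Omega)$ then $\grad p\in\mathbf L^2(\Omega)$, so $\bw_p=\cE^{-1}\grad p\in\mathcal D(\cE)=\mathbf H^2(\Omega)\cap\bV$ with $\|\bw_p\|_{\mathbf H^2(\Omega)}\lesssim\|\grad p\|_{L^2(\Omega)}$ (using smoothness of $\Omega$ and $k$ constant). Then $Bp=-\grad\cdot\bw_p\in H^1(\Omega)$ with $\|Bp\|_{H^1(\Omega)}\lesssim\|\bw_p\|_{\mathbf H^2(\Omega)}\lesssim\|\grad p\|_{L^2(\Omega)}\lesssim\|p\|_{V}$ by Poincar\'e--Wirtinger; combined with $Bp\in L^2_0(\Omega)$ this gives $B\in\mathscr L(V)$.
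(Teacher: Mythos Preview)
Your argument is correct in all parts. The variational representation $(Bp,q)=e(\bw_p,\bw_q)$ immediately delivers self-adjointness, monotonicity, and injectivity; the coercivity $(Bp,p)\gtrsim\|p\|_{L^2}^2$ via the Bogovski\u{\i}/Ne\v{c}as inf--sup inequality is precisely the right tool for the isomorphism claim on $L^2_0(\Omega)$; and the elliptic-regularity bootstrap $p\in V\Rightarrow\bw_p\in\mathcal D(\cE)\Rightarrow Bp\in H^1\cap L^2_0=V$ handles the $\mathscr L(V)$ statement cleanly.

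A note on comparison: the paper does not actually prove this lemma---it is quoted from \cite{bmw} with the remark ``We state its relevant properties as a lemma coming from \cite{bmw}''. So you have supplied a complete proof where the paper only cites. Your identification of the inf--sup step as the one nontrivial ingredient is accurate; everything else is, as you say, bookkeeping with the Riesz isomorphism $\cE:\bV\to\bV'$ and elliptic regularity on the smooth domain. One very minor cosmetic point: in the duality computation for $\|\bw_p\|_{\bV}$ it is cleaner to write the supremum with an absolute value (or note that the sup is over $\pm\bv$), though your choice of $\bv_p$ with $\grad\cdot\bv_p=-p$ makes the sign come out correctly regardless.
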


Finally, we conclude with a definition of weak solutions for \eqref{PVEsystem} which will be valid in all parameter regimes, and is consistent with the abstract definition given in the Appendix for \eqref{ImplicitCauchy}. We note that such a definition encompassing all regimes does not seem to have appeared in the literature to date.
Recall that the fluid content is given by ~$\zeta \equiv c_0p+\alpha \nabla \cdot \bu + \delta_2\nabla \cdot \bu_t$, which depends on the nonnegative paremeters $c_0$ and $\delta_2$.
\begin{definition}\label{weaksol} Let $c_0, \delta_1,\delta_2 \ge 0$. 
We say that $(\bu,p) \in L^2(0,T;\bV \times V)$, such that $\delta_1\bu\in H^1(0,T;\bV)$ and $\zeta_t\in L^2(0,T;V')$, is a weak solution to problem \eqref{PVEsystem} if: 
\begin{itemize}
	\item For every pair of test functions $(\bv,q)\in\bV\times V$, the following equality holds in the sense of $\mathcal D'(0,T)$:
	\begin{align}\label{WeakFormEq}
		e(\bu,\bv)+\delta_1\frac{d}{dt}e(\bu,\bv)
		+ (\alpha\nabla p,\bv)_{L^2(\Omega)}
		+\frac{d}{dt}(\zeta,q)_{L^2(\Omega)}+a(p,q)
		=\langle \bF,\bv \rangle_{\bV'\times \bV}
		+\langle S,q \rangle_{V'\times V}.
	\end{align}
	\item The initial conditions $\zeta(0)=d_0$ and $\delta_1\bu(0)=\bu_0$ are satisfied in the sense of $C([0,T];V')$ and $C([0,T];\bV')$, respectively. 
\end{itemize}

\end{definition}
There are many notions of ``stronger" solutions to Biot-type systems in the literature (see \cite{show1}, for instance). To avoid confusion with notions of {\em strong} or {\em classical} solutions coming from other references, any notion of a {stronger solution} will be discussed here in the sense of {weak solutions with additional regularity}. Depending on the regularity of the sources in given cases, we will comment on when PDEs hold in a point-wise sense.

\subsection{Review of Biot Solutions: $\delta_1=\delta_2=0$}\label{secbiot}
We begin with a discussion of classical Biot dynamics in order to establish a baseline for comparison with our results below on poro-visco-elastic dynamics.
Consider now the quasi-static Biot dynamics---in the absence of visco-elastic effects---given in the operator-theoretic form by 
\begin{equation}\label{bioteq} \begin{cases}
\cE\bu+\alpha \nabla p = \mathbf F \in H^1(0,T;\bV') \\ 
[c_0p+  \alpha \nabla \cdot \bu]_t+Ap=S \in L^2(0,T;V')\\
[c_0p+  \alpha \nabla \cdot \bu](0) = d_0 \in V'.
\end{cases}
\end{equation}
Although there are established references (such as \cite{showmono,show1}) that discuss linear Biot solutions, as well as more recent papers (such as \cite{bmw}), we provide here a direct discussion of Biot solutions. Namely, we present the regularity of solutions as a function of the data, and clearly state the associated a priori estimates. While Theorems \ref{th:main2} and \ref{strongBiot} are not novel results, it is valuable to present them in this way to provide context with our visco-elastic results for $\delta_1>0$ below; we also provide brief proof sketches for $\delta_1=0$ for completeness. 

From the established theory for implicit, degenerate equations (discussed in the Appendix), one seeks weak solutions in the class $$(\bu,p) \in L^2(0,T;\bV\times V).$$
Formally solving the elasticity equation $a.e.~t$~ as ~$\bu=-\alpha \cE^{-1}\nabla p+\cE^{-1}\mathbf F$, and relabeling the source $$S~ \mapsto ~S-\nabla \cdot \cE^{-1}\mathbf F_t\equiv \widetilde S,$$ we obtain the reduced, implicit  equation:
\begin{equation}\label{reducedBiot}
[\mathcal Bp]_t+Ap=\widetilde S \in L^2(0,T;V'),~~[\mathcal Bp](0)=d_0 \in V',~~ \text{ where }  \mathcal B = (c_0I+\alpha^2B). 
\end{equation}

The above system can, in principle, degenerate if $c_0=0$ and $B$ has a non-trivial kernel \cite{bmw}---though this will not be the case here. Indeed, in this work, $\mathcal B$ is invertible on $L^2_0(\Omega)$ independent of $c_0\ge 0$.
\begin{remark} We  note that the temporal regularity of $\bF$ is directly invoked in the reduction step. Namely, to consider $\widetilde S$ as a given RHS for the abstract degenerate equation, we must require that $$\nabla \cdot \cE^{-1}\mathbf F_t \in L^2(0,T;\bV');$$ this provides  consistency with the original source, $S$. Additionally, to ``solve" the elasticity equation for $\bu$ (given $p$ and $\bF$) we will require $\bF \in \bV'$ ~$a.e. ~t$ to obtain $\bu \in \bV$. Smoother considerations below will require additional spatial regularity for $\bF$ and $\bF_t$. Regularity of $\bF_t$ is at issue for the analysis of Biot's dynamics \cite{bmw} and the analysis below. \end{remark}

To obtain the results below, one applies the general theory developed in \cite{auriault} and \cite{show1} for Biot's dynamics, and adapted recently in \cite{bw,bmw}. 

 \begin{theorem}\label{th:main2}  Let $d_0\in L_0^2(\Omega)$, $\bF \in H^1(0,T;\bV')$, $S \in L^2(0,T;V')$, and $c_0\geq 0$. Then there exists a unique weak solution with $(\bu,p)\in C([0,T];\bV)\times L^2(0,T;V)$ to \eqref{bioteq}. Moreover, any weak solution satisfies the following energy estimate:
\begin{align}\label{Thm2Estimate} 
{||\bu||_{L^{\infty}(0,T;\bV)}^2} + 
c_0||p||^2_{L^{\infty}(0,T;L^2(\Omega))}
+\|p\|^2_{L^2(0,T;V)} &
\\\nonumber
  \lesssim ||d_0||_{L^2(\Omega)}^2&+\|S\|^2_{L^2(0,T;V')} +C(T)||\bF||_{H^1(0,T;\mathbf V')}^2.
\end{align}
In all cases ($c_0 \ge 0$) the dynamics are {\em parabolic} in the sense that if $S \equiv 0$ and $\mathbf F \equiv 0$, we have:
\begin{align}
||Ap||_{L^{\infty}(0,T;L^2(\Omega))}+||\cE\bu||_{L^{\infty}(0,T;\mathbf L^2(\Omega))} + ||\nabla \cdot \left[\cE \bu\right] ||_{L^{\infty}(0,T;L^2(\Omega))} \lesssim \dfrac{||d_0||_{L^2(\Omega)}}{T},
\end{align}
to which elliptic regularity for $A$ and $\cE$ can then be applied (as in \cite{show1}).
\end{theorem}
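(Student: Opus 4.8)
\textbf{Proof plan for Theorem \ref{th:main2}.}
The plan is to work with the reduced, implicit equation \eqref{reducedBiot}, exploiting Lemma \ref{Binvert}: since $\mathcal B = c_0 I + \alpha^2 B$ is self-adjoint, monotone, and \emph{invertible} on $L^2_0(\Omega)$ (the monotone part $\alpha^2 B$ already being an isomorphism there, so adding $c_0 I$ with $c_0\ge 0$ preserves invertibility), the equation $[\mathcal B p]_t + A p = \widetilde S$ with $[\mathcal B p](0) = d_0$ falls squarely into the theory of abstract implicit/degenerate parabolic equations recalled in the Appendix (following \cite{auriault,show1}, adapted in \cite{bw,bmw}). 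First I would verify the reduction is legitimate: solving the Lam\'e equation pointwise in $t$ gives $\bu = -\alpha \cE^{-1}\nabla p + \cE^{-1}\bF$, which requires only $\bF(t)\in\bV'$ and produces $\bu(t)\in\bV$; then $\alpha \nabla\cdot\bu = -\alpha^2 B p + \nabla\cdot\cE^{-1}\bF$, so $\zeta_t = [\mathcal B p]_t + \nabla\cdot\cE^{-1}\bF_t$, and the hypothesis $\bF\in H^1(0,T;\bV')$ ensures $\widetilde S = S - \nabla\cdot\cE^{-1}\bF_t \in L^2(0,T;V')$, consistent with Definition \ref{weaksol}. Existence and uniqueness of $p\in L^2(0,T;V)$ with $\mathcal B p\in H^1(0,T;V')$ then follow from the cited abstract theory (the coercivity of $a(\cdot,\cdot)$ on $V$ supplying the parabolic smoothing), and $\bu\in C([0,T];\bV)$ is recovered from the reduction formula and the temporal continuity of $p$ as a $V'$-valued (indeed $\mathcal B p \in C([0,T];V')$) function together with $\bF\in C([0,T];\bV')$.

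For the energy estimate \eqref{Thm2Estimate}, I would test the reduced equation with $p$ itself (justified by a Galerkin/Steklov-averaging argument as in the abstract framework): the duality pairing $\langle [\mathcal B p]_t, p\rangle$ equals $\tfrac12 \tfrac{d}{dt}\langle \mathcal B p, p\rangle$ by self-adjointness of $\mathcal B$, and $\langle \mathcal B p, p\rangle = c_0\|p\|^2 + \alpha^2(Bp,p) \gtrsim c_0\|p\|^2$ by monotonicity; the term $\langle A p, p\rangle = a(p,p) = \|p\|_V^2$ gives the dissipation; Cauchy--Schwarz and Young on $\langle \widetilde S, p\rangle$, absorbing $\tfrac12\|p\|_V^2$, followed by integration in $t$ and Gr\"onwall (trivially, since there is no lower-order term — just direct integration) yield control of $c_0\|p\|^2_{L^\infty L^2} + \|p\|^2_{L^2 V}$ by $\|d_0\|^2 + \|\widetilde S\|^2_{L^2 V'}$. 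Unpacking $\widetilde S$ gives the $\|S\|^2_{L^2 V'}$ and $\|\bF_t\|^2_{L^2\bV'}$ contributions, and the $\|\bu\|^2_{L^\infty\bV}$ bound comes from squaring the reduction formula, $\|\bu\|_\bV \lesssim \|\nabla p\|_{V'}+\|\bF\|_{\bV'} \lesssim \|p\|_V + \|\bF\|_{\bV'}$, then taking $L^\infty$ in $t$ — here the $C(T)\|\bF\|^2_{H^1 \bV'}$ factor absorbs the $L^\infty$-in-time norm of $\bF$ via its $H^1(0,T)$ control.

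For the parabolic regularization statement, with $S\equiv 0$ and $\bF\equiv 0$ the reduced equation is the genuinely homogeneous implicit equation $[\mathcal B p]_t + A p = 0$, $[\mathcal B p](0) = d_0$. I would use the analyticity/smoothing of the associated (degenerate) semigroup — equivalently, the standard energy-in-time trick: testing with $tAp$ (or differentiating in $t$ and testing appropriately) produces the bound $t\,\|p(t)\|_V^2 + \int_0^t s\|A p(s)\|_{V'}^2\,ds \lesssim \|d_0\|^2$, and iterating once more gives $\|A p(t)\|_{L^2(\Omega)} \lesssim \|d_0\|/t$; since $\alpha\nabla\cdot\bu = -\alpha^2 B p$ (with $\bF=0$) and $\cE\bu = -\alpha\nabla p$, elliptic regularity for $A$ bootstraps $p$ into $\mathcal D(A)$, which controls $\|\nabla p\|$, hence $\|\cE\bu\|_{L^2}$ and $\|\nabla\cdot[\cE\bu]\| = \alpha\|\nabla\cdot\nabla p\|$-type quantities, all with the $1/T$ decay. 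The main obstacle I anticipate is purely bookkeeping rather than conceptual: carefully tracking how the \emph{temporal} regularity of $\bF$ (the $H^1(0,T;\bV')$ hypothesis, needed so that $\bF_t$ feeds into $\widetilde S$) propagates through the reduction and appears correctly as $C(T)\|\bF\|^2_{H^1(0,T;\bV')}$ in \eqref{Thm2Estimate}, and ensuring the test-function manipulations (particularly $\langle[\mathcal B p]_t,p\rangle = \tfrac12\tfrac{d}{dt}\langle \mathcal B p,p\rangle$ and the time-weighted estimates) are rigorously justified within the weak/degenerate framework rather than merely formally — this is where one leans on the abstract results of \cite{show1,bmw} recalled in the Appendix.
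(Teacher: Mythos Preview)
Your plan matches the paper's approach closely: reduce to the implicit equation \eqref{reducedBiot}, invoke the abstract degenerate-evolution theory (Appendix, \cite{show1,bmw}) for existence, derive the energy estimate on the reduced equation, and read off parabolic smoothing from the homogeneous problem. Two points deserve comment.

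First, on uniqueness: you defer entirely to the abstract theorem, whereas the paper singles out the classical trick of testing the reduced equation with $\int_t^T p\,ds$ (as in \cite[pp.~116--117]{showmono}). This matters because for a \emph{generic} weak solution one does not know a priori that $p_t$ (or even $\langle[\mathcal B p]_t,p\rangle = \tfrac12\tfrac{d}{dt}\langle\mathcal B p,p\rangle$) is justified as a test; the $\int_t p$ argument sidesteps exactly that regularity issue. Your remark that the product rule ``is rigorously justified within the weak/degenerate framework'' is correct in spirit but the actual mechanism in this setting is that integrated-in-time test function.

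Second, there is a small but genuine gap in your derivation of the $\|\bu\|_{L^\infty(0,T;\bV)}$ bound. You write $\|\bu\|_\bV \lesssim \|p\|_V + \|\bF\|_{\bV'}$ and then ``take $L^\infty$ in $t$,'' but the energy estimate only controls $\|p\|_{L^2(0,T;V)}$, not $\|p\|_{L^\infty(0,T;V)}$. The fix is already implicit in your own computation: the quantity $(\mathcal B p,p) = c_0\|p\|^2 + \alpha^2(Bp,p)$ that you control in $L^\infty_t$ satisfies $\alpha^2(Bp,p) = \|\alpha\cE^{-1}\nabla p\|_\bV^2$ (just pair $\cE\bw=\nabla p$ with $\bw$), and since $\bu = -\alpha\cE^{-1}\nabla p + \cE^{-1}\bF$, this directly yields $\|\bu(t)\|_\bV^2 \lesssim (\mathcal B p(t),p(t)) + \|\bF(t)\|_{\bV'}^2$ pointwise in $t$. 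So the $L^\infty$ bound on $\bu$ comes from the $B$-part of the conserved quantity, not from $\|p\|_V$.
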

\begin{proof}[Proof Sketch of Theorem \ref{th:main2}]
The theorem above can be obtained in two steps: First, weak solutions can be constructed directly (for instance through Galerkin method or by the theory  in the Appendix, e.g., \cite{bmw,bw,cao2,show1}.) The particular {\em constructed solution} will satisfy the energy identity \cite{bmw,showmono}. Then, using a classical argument involving the test function   $ \int_t p~ds$ in the reduced pressure equation \eqref{reducedBiot} (see \cite[pp.116--117]{showmono}), one  concludes that weak solutions are unique. \end{proof}
We mention that the issue of uniqueness is much more subtle in the case of time-dependent coefficients. See the detailed discussion in \cite{bmw}. 

\begin{remark} Above, we have the ability to {\em  specify only the quantity} $d_0 \in L^2_0(\Omega)$---rather than a pair $(\bu_0,d_0)$ or $(p_0,d_0)$, so that ~$c_0p_0+\nabla \cdot \bu_0 =d_0$. Indeed, given $d_0 \in L^2_0(\Omega)$ in this framework and recalling that $$\mathcal Bp(0)=[(c_0I+B)p](0)=c_0p_0+\nabla \cdot \bu_0$$ we have that
$$d_0 \in L_0^2(\Omega) \implies \mathcal Bp(0) \in L_0^2(\Omega) \implies p(0) \in L_0^2(\Omega) \implies \cE\bu(0) \in \mathbf V' \implies \bu(0) \in \bV.$$
In the case when the operator $B$ is not invertible on a chosen state space and $c_0=0$ (e.g. $L^2(\Omega)$), the issue can be more subtle. (See \cite{bmw} for more discussion, as well as the original papers \cite{show1,auriault}.) The equivalence of specifying $p_0$ and $d_0$  will not necessarily be available when $\delta_1>0$ and an additional time derivative is present in the equations.\end{remark}

We now briefly describe the notion of a smooth solution for the classical Biot dynamics above, when the data are smooth. These results can be obtained through elliptic regularity for $\cE$ and $A$ on $L^2(\Omega)$, and formal a priori estimates via the weak form, or via the implicit semigroup formulation as applied to Biot's dynamics in \cite[Theorems 3.1 and 4.1]{show1}. We first invoke the properties of $B$ in the context of \eqref{bioteq} to obtain the chain:
$$d_0 \in V \implies \mathcal Bp(0) \in V \implies p(0) \in V \implies \cE\bu(0) \in \mathbf L^2(\Omega)\implies \bu(0) \in \mathcal D(\cE).$$ 
Via the standard methodology for parabolic dynamics equation, choosing stronger initial data yields  additional regularity.
\begin{theorem}\label{strongBiot}
If $d_0 \in V$,  with $S \in H^1(0,T;V')$ and $\bF \in H^2(0,T;V')$, there exists a unique weak solution with the regularity:
$$p \in H^1(0,T; L_0^2(\Omega)) \cap L^{\infty}(0,T;V)~~\text{and}~~\bu \in H^1(0,T;\bV).$$

If, in addition, $S \in L^2(0,T; L^2(\Omega))$ and $\bF \in H^1(0,T;\mathbf L^2(\Omega))$, the above solution also has \eqref{bioteq} a.e. $t$ and $a.e.$ $\mathbf x$ and we obtain the additional regularity:
$$\bu \in L^2(0,T;\mathcal D(\cE)),~~p \in L^{\infty}(0,T;\mathcal D(A)).$$ 
Lastly, if we also assume $\bF \in L^{\infty}(0,T; \mathbf L^2(\Omega))$, then $\bu \in L^{\infty}(0,T;\mathcal D(\cE)).$
\end{theorem}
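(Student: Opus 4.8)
The plan is to proceed via the reduced pressure equation \eqref{reducedBiot}, $[\mathcal Bp]_t + Ap = \widetilde S$ with $\mathcal B = c_0 I + \alpha^2 B$, and to exploit that $\mathcal B$ is a bounded, self-adjoint, positive (hence invertible, by Lemma \ref{Binvert}) operator on $L^2_0(\Omega)$ which also maps $V$ boundedly into $V$. The idea is that \eqref{reducedBiot} is, modulo the weighting $\mathcal B$, a standard parabolic equation for $p$ driven by $A$, so I expect to recover the usual parabolic regularity theory by a weighted energy argument. First I would verify the reduction is legitimate at the claimed regularity: since $\bF \in H^2(0,T;V')$, in particular $\bF_t \in H^1(0,T;V') \subset L^2(0,T;V')$, so $\widetilde S = S - \nabla\cdot\cE^{-1}\bF_t \in L^2(0,T;V')$; moreover $\widetilde S_t = S_t - \nabla\cdot\cE^{-1}\bF_{tt} \in L^2(0,T;V')$ using $S\in H^1(0,T;V')$ and $\bF_{tt}\in L^2(0,T;V')$. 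The chain displayed before the theorem gives $\mathcal Bp(0) = d_0 \in V \implies p(0)\in V$, so the initial datum lives in the energy space for the "smoother" estimate.

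For the first regularity statement, I would test \eqref{reducedBiot} with $Ap$ (equivalently, differentiate in time and test with $p_t$, whichever is cleaner given the $V'$-valued RHS): formally,
\begin{equation}\label{pf-energy}
(\mathcal B p_t, Ap) + (Ap,Ap) = (\widetilde S, Ap),
\end{equation}
and since $\mathcal B$ commutes suitably with $A$ up to lower-order terms (or, more robustly, one differentiates \eqref{reducedBiot} in time and pairs with $p_t$), one gets control of $\tfrac{d}{dt}\|p\|_V^2$ (using $a(\cdot,\cdot)$ and boundedness of $\mathcal B$ on $V$) together with $\|p_t\|^2$, after absorbing $(\widetilde S, p_t)$ via Young's inequality and $\|\widetilde S\|_{V'}\|p_t\|_V$. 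This yields $p \in H^1(0,T;L^2_0(\Omega)) \cap L^\infty(0,T;V)$; then $\cE\bu = \bF - \alpha\nabla p \in L^\infty(0,T;\bV')$ so $\bu\in L^\infty(0,T;\bV)$, and differentiating the elasticity equation, $\cE\bu_t = \bF_t - \alpha\nabla p_t \in L^2(0,T;\bV')$ gives $\bu_t \in L^2(0,T;\bV)$, i.e. $\bu\in H^1(0,T;\bV)$. Uniqueness follows from Theorem \ref{th:main2} since these are weak solutions with extra regularity.

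For the second statement, the additional hypotheses $S\in L^2(0,T;L^2(\Omega))$ and $\bF\in H^1(0,T;\mathbf L^2(\Omega))$ upgrade $\widetilde S$ to $L^2(0,T;L^2(\Omega))$ (now $\nabla\cdot\cE^{-1}\bF_t \in L^2(0,T;L^2(\Omega))$ using $\cE^{-1}:\mathbf L^2\to \mathcal D(\cE)\subset \mathbf H^2$ and the divergence). Re-running \eqref{pf-energy} with the genuine $L^2$ pairing $(\widetilde S, Ap)$ now gives $Ap \in L^2(0,T;L^2(\Omega))$; elliptic regularity for $A$ (smooth domain, Neumann datum) then yields $p\in L^2(0,T;\mathcal D(A))$, and with the $L^\infty$-in-time bound on $\|p\|_V$ from step one, a further test (e.g. testing $[\mathcal Bp]_t = \widetilde S - Ap$ against $Ap$ and integrating by parts in time, or a Gronwall on $\|Ap\|^2$) upgrades to $p\in L^\infty(0,T;\mathcal D(A))$. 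Then $\cE\bu = \bF - \alpha\nabla p \in L^2(0,T;\mathbf L^2(\Omega))$ (since $\nabla p \in L^2(0,T;\mathbf H^1)$ and $\bF\in L^2(0,T;\mathbf L^2)$), so elliptic regularity for $\cE$ gives $\bu\in L^2(0,T;\mathcal D(\cE))$; and the last line, $\bF\in L^\infty(0,T;\mathbf L^2(\Omega))$, combined with $p\in L^\infty(0,T;\mathcal D(A))$, gives $\cE\bu\in L^\infty(0,T;\mathbf L^2(\Omega))$, hence $\bu\in L^\infty(0,T;\mathcal D(\cE))$. That the PDEs hold a.e.\ $t$, a.e.\ $\mathbf x$ is then immediate since every term in \eqref{bioteq} is identified in $L^2(\Omega)$.

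The main obstacle I anticipate is the first energy estimate \eqref{pf-energy}: because $\mathcal B$ and $A$ do not commute, pairing $[\mathcal Bp]_t$ with $Ap$ does not cleanly produce a time derivative of $\|p\|_V^2$, and one must instead either differentiate \eqref{reducedBiot} in time and pair with $p_t$ (which requires the temporal regularity of $\widetilde S$ I checked above, and care with the initial value $p_t(0)$, best handled by a difference-quotient / Galerkin approximation rather than working directly with $p_t$), or carefully commute $\mathcal B$ past $A$ and bound the commutator using the mapping properties of $B=-\nabla\cdot\cE^{-1}\nabla$ on $V$ from Lemma \ref{Binvert}. A secondary technical point is justifying that the constructed weak solution is regular enough for these manipulations to be rigorous, which I would handle by performing all estimates at the Galerkin level and passing to the limit, then invoking uniqueness from Theorem \ref{th:main2} to transfer the bounds to the (unique) weak solution.
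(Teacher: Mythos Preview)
Your plan is essentially correct and would work, but you have overcomplicated the first energy estimate and, in doing so, manufactured the very obstacle you describe at the end. The paper avoids the commutator issue between $\mathcal B$ and $A$ entirely by testing the \emph{undifferentiated} reduced equation \eqref{reducedBiot} with $p_t$ (not with $Ap$, and not after differentiating in time). This gives
\[
(\mathcal Bp_t,p_t)+\tfrac{1}{2}\dfrac{d}{dt}\,a(p,p)=\langle \widetilde S,p_t\rangle,
\]
where the first term is immediately coercive, $(\mathcal Bp_t,p_t)\gtrsim \|p_t\|^2$, by positivity of $\mathcal B=c_0I+\alpha^2B$ on $L^2_0(\Omega)$, and the second is a clean time derivative since $(Ap,p_t)=a(p,p_t)$. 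No commutator arises. The right-hand side is handled either by integrating $\langle\widetilde S,p_t\rangle$ by parts in time when $\widetilde S\in H^1(0,T;V')$, or directly as an $L^2$ pairing when $\widetilde S\in L^2(0,T;L^2_0(\Omega))$. Your two proposed alternatives (pair with $Ap$, or differentiate first and pair with $p_t$) both work in principle but are strictly harder: the first forces you to control a genuine commutator $[\mathcal B,A]$, and the second requires a bound on $(\mathcal Bp_t(0),p_t(0))$ which is not directly available from $d_0\in V$.

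For the second batch of regularity, the paper also takes a slightly different route: rather than continuing with the reduced equation, it returns to the full system \eqref{bioteq}, differentiates the elasticity equation in time, and tests $(\eqref{bioteq}_1)_t$ with $\bu_t$ and $\eqref{bioteq}_2$ with $p_t$. Adding these, the cross terms $\alpha(\nabla p_t,\bu_t)$ and $\alpha(\nabla\cdot\bu_t,p_t)$ cancel, yielding directly $\bu_t\in L^2(0,T;\bV)$ and $p\in L^\infty(0,T;V)$; the remaining spatial regularity ($p\in \mathcal D(A)$, $\bu\in\mathcal D(\cE)$) is then read off from the individual equations via elliptic regularity, exactly as you do. Your approach via the reduced equation and elliptic regularity reaches the same conclusions and is a legitimate alternative; the paper's use of the coupled system is marginally cleaner because the cancellation of cross terms replaces any need to track $\widetilde S$ separately.
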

We note that,  solutions of higher regularity---for instance considering $d_0$ or $p_0 \in \mathcal D(A)$---can be considered; however, one must address commutators associated to boundary conditions encoded in $A$ and $B$. This can be seen, for instance, in attempting to test \eqref{reducedBiot} with $Ap_t$. 

For completeness, we provide the formal identities which give rise to the smooth solutions above.
\begin{proof}[Proof Sketch of Theorem \ref{strongBiot}]
Consider the reduced form of the Biot equation, 
$$[\mathcal Bp]_t+Ap=\widetilde S \in H^1(0,T;V'),$$
with $\mathcal B = [c_0I+\alpha^2B],$  as defined in Section \ref{opsec} and $$\widetilde S \equiv S-\nabla \cdot \cE^{-1}\mathbf F_t \in H^1(0,T;V').$$

Consider a smooth solution (as, for instance, for finite dimensional approximants) and test with $p_t$ to obtain:
\begin{align} \label{mod1}
\dfrac{k}{2}||\nabla p(T)||^2+\int_0^T(\mathcal Bp_t,p_t)dt =& ~ \frac{k}{2}||\nabla p(0)||^2+\langle\widetilde S(0) , p(0) \rangle_{V'\times V} \\ \nonumber
& + \langle \widetilde S(T), p(T)\rangle_{V'\times V} + \int_0^T\langle \widetilde S_t, p \rangle_{V'\times V} dt.
\end{align}
Alternatively, if we assume that $\widetilde S \in L^2(0,T;L^2_0(\Omega))$ (which follows from the additional assumptions above), then the identity is similar: 
\begin{align} \label{mod2}
\dfrac{k}{2}||\nabla p(T)||^2+\int_0^T(\mathcal Bp_t,p_t)dt =& ~ \frac{k}{2}||\nabla p(0)||^2+\int_0^T(\widetilde S,p_t)dt\end{align}
In both situations, the assumed regularity of the data is sufficient to estimate the RHS and obtain an estimate on $p$.

With regularity of the pressure $p$ in hand, we consider the full system in \eqref{bioteq} and formally differentiate the elasticity equation \eqref{bioteq}$_1$. This yields:
\begin{equation}\label{bioteq*} \begin{cases}
\cE\bu_t+\alpha \nabla p_t = \mathbf F_t \in L^2(0,T;\bV') \\ 
[c_0p+  \alpha \nabla \cdot \bu]_t+Ap=S \in L^2(0,T;L^2_0(\Omega))~~ \text{or} ~~H^1(0,T;V')
\end{cases}
\end{equation}
We can test the first equation by $\bu_t$ and the second by $p_t$ and add to obtain the identity:
\begin{align}
e(\bu_t,\bu_t)+c_0||p_t||^2+\dfrac{k}{2}\dfrac{d}{dt}||\nabla p||^2 = \langle \bF_t, \bu_t\rangle_{\bV'\times \bV} +(S,p_t )_{L^2(\Omega)},
\end{align}
where we have assumed the case $S \in L^2(0,T;L_0^2(\Omega))$---the appropriate modifications are clear for the other case, as in \eqref{mod1} and \eqref{mod2} above.
The RHS can be estimated, with the assumed regularities of $\bF_t$ and $S$. The additional regularities stated in the theorem are then read-off from the individual equations in \eqref{bioteq}.
\end{proof}
\subsection{Visco-elastic Cases of Interest} \noindent
In considering { full, linear poro-visco-elasticity}, we will take $\delta_1>0$, and retain the parameter to track terms which depend on it. We consider the independent cases:  \begin{itemize} \setlength\itemsep{.01cm} \item  compressible constituents $c_0>0$ and incompressible constituents $c_0=0$;
\item standard fluid content $\delta_2=0$ and adjusted fluid content $\delta_2>1$. \end{itemize} 
This yields four cases of interest for:
\begin{equation}\label{main1} \begin{cases}
\mathcal \cE\bu+\delta_1\cE\bu_t+\alpha \nabla p = \mathbf F \in H^1(0, T; \bV')\\ 
[c_0p+\alpha \nabla \cdot \bu+\delta_2\nabla \cdot \bu_{t}]_t+Ap=S  \in H^1(0,T;V').
\end{cases}
\end{equation}

In the analysis in the sequel, we will often require additional temporal regularity on the volumetric source $S$ and additional regularity for $\bF$ beyond what is specified above. It is natural, and analogous to \eqref{bioteq}, to take initial conditions of the form
\begin{equation}\label{naturalIC}[c_0p+\alpha \nabla \cdot\bu+\delta_2\nabla \cdot \bu_t](0)=d_0 \in V', ~\text{and}~ ~\delta_1\cE(\bu(0))\in \bV'.\end{equation}
However, we will discuss  initial conditions more precisely on a case by case basis. In fact, {a main feature of our subsequent analysis is in addressing this point}. Which initial quantities can be specified depends on the specific parameter regime ($\delta_1,\delta_2,c_0 \ge 0$), of course being mindful of possible {\em over-specification}. Owing to the {time-derivatives present in both equations}---in contrast to Biot's traditional equations \eqref{bioteq}---we are unable to circumvent the need to specify two initial quantities; however the relationship between them will be an interesting question to be addressed.

\vskip.2cm \noindent {\bf Summary of Initial Conditions}: We now provide a summary of  proper specifications of initial conditions, with justifications to follow in the appropriate sections. Of course, there are questions of scaling and regularity of these conditions. Such matters will translate into the sought-after notion of solution. Though a natural quantity is the fluid content, we relegate our summary to the two primal variables: $(p,\bu)$ with possible initial conditions $(p(0),\bu(0))$. Of course these are possibly related through the quantity $$d_0 = [c_0p+\alpha \nabla \cdot \bu+\delta_2\nabla \cdot \bu_t](0).$$
{The proper initial conditions} for \eqref{main1} are  given in the table below. 
We take $\delta_1>0$ and consider $c_0 \ge 0,~ \delta_2 \ge 0$.
\begin{center}
\begin{tabular}{|l | l | l | }
\hline
& $c_0=0$ & $c_0>0$   \\
\hline
$\delta_2=0$ &  $p(0)$ or $(p(0), \bu(0))$ & $(p(0),\bu(0))$ or $(p(0), p_t(0))$      \\
\hline
$\delta_2>0$  &    $p(0)$ or $(p(0), \bu(0))$  & $p(0)$ or $(p(0), \bu(0))$  \\
\hline
\end{tabular}
\end{center}

\begin{remark} There may be  physical restrictions about the permissibility of certain parameter combinations. For instance, in the application to biological tissues, when it is standard to take $\alpha=1$ and $c_0=0$ \cite{GGbook}, the parameter $\delta_2$ should be nullified \cite{MBE1,MBE2}. This is to say, the combination $\delta_1, \delta_2>0, ~c_0=0$ may not be physically relevant, however, in this mathematically-oriented work we  accommodate all parameter combinations and describe the features of the resulting dynamics. \end{remark}

\section{Poro-Visco-elastic System, Reduction, and Solutions}\label{viscosec}
\noindent Section \ref{viscosec} constitutes the main thrust of the paper. We will now consider the linear poro-visco-elastic system, as presented in \eqref{main1}, with $\delta_1>0$. We note that the boundary conditions are embedded in the operators $A$ and $\cE$.
\subsection{Outline and Discussion of Main Results} \noindent
Section \ref{viscosec} is divided as follows: First, we consider the traditional fluid content in the model (taking $\delta_2=0$) in Section \ref{thissec} and conditioning on the values of the storage coefficient $c_0 \ge 0$ in the contained subsections. Subsequently, in Section \ref{thiss}, we consider the model with modified fluid content ($\delta_2>0$).  In addition to analyzing the full system, we will reformulate the model abstractly to apply established results and obtain well-posedness of the dynamics in a variety of  functional frameworks. 
{In each case described in Section \ref{viscosec}, we provide:}
\begin{itemize}  \setlength\itemsep{.05cm}\item a discussion of the dynamics, \item a state reduction, \item a discussion of initial conditions, \item and a well-posedness theorem with estimates. \end{itemize}

When it is instructive, we provide corresponding  estimates on solutions and describe the resulting constructions of  solutions. {While the abstract results and main techniques that we employ are not novel to this paper, (i) {\em the abstract problem formulation is}, as well as (ii) {\em the application of these abstract results to the linear poro-visco-elastic model}. Such results on poro-visco-elastic systems have not appeared in the literature} to the knowledge of the authors. 

The novel results we obtain here are now briefly described. 
 \begin{itemize}  \setlength\itemsep{.05cm}
\item Theorem \ref{Thm-DWave-Energy1} utilizes the full formulation of the linear poro-visco-elastic dynamics and provides clear a priori estimates on solutions, not distinguishing between cases with the storage coefficient $c_0\ge0$. 

\item Theorem \ref{Thm-DWave-Energy2} gives a well-posedness result when $c_0>0$ that is obtained through a priori estimates without the use of the semigroup framework. Theorem \ref{dampedSG} obtains a well-posedness result and utilizes the semigroup framework. Different assumptions on initial conditions provide different outcomes in these aforementioned results. Subsequently, a semigroup decay result is presented in Theorem \ref{decay}. 

\item A similar sequence is repeated, on different spaces, when $c_0=0$; this case is referred to as the ODE case, for reasons explained below. Theorem \ref{ODEt1} provides well-posedness of solutions, Theorem \ref{ODEt2} provides a detailed description of the regularity of solutions, and Theorem \ref{ODEdecay} gives explicit exponential decay, even in this ODE setting, obtained through direct estimates. 

\item In the case where $\delta_2>0$ (modified fluid content), we have only one new theorem, Theorem \ref{finth}. This is owing to the fact that the abstract structure of this problem reduces to that of the traditional elastic Biot system, to which the previous results are then applied. The novel contribution here, then, is to demonstrate how the system is reduced in this fashion.
\end{itemize}
 
\subsection{Case 1: Traditional Fluid Content, $\delta_2$=0}\label{thissec}
Take $\delta_2=0$  in \eqref{main1}. We begin with a formal discussion of weak solutions for the full system, and then proceed to the abstract system reduction. After these discussions, we  proceed to rigorous statements concerning solutions.

\subsubsection{Motivating Discussion}
Let us begin by describing the energy identity for the {\em full} dynamics, before any system reduction is made.
Recall the system full dynamics under consideration:
\begin{equation}\label{main1**} \begin{cases}
\mathcal \cE\bu+\delta_1\cE\bu_t+\alpha \nabla p = \mathbf F \in H^1(0, T; \bV')\\ 
[c_0p+\alpha \nabla \cdot \bu]_t+Ap=S  \in H^1(0,T;V').
\end{cases}
\end{equation}
From this, we will have (following from \cite{bgsw}) the a priori estimate on solutions:
\begin{align}\label{energyest}
||\bu||^2_{L^{\infty}(0,T;\bV)}+c_0||p||^2_{L^{\infty}(0,T;L^2(\Omega))}+\delta_1||\bu_t||^2_{L^2(0,T;\bV)}+||p||^2_{L^2(0,T;V)} & \\\nonumber
\lesssim c_0||p_0||^2 + ||\bu_0||_V^2& +||S||^2_{L^2(0,T;V')} + ||\bF||^2_{L^2(0,T; \mathbf L^2(\Omega))}.
\end{align}
Several comments are in order:
\begin{itemize} 
\item We need not invoke any {\em additional} regularity of the sources $S$ or $\bF$ beyond those appearing in \eqref{main1**} and \eqref{energyest}; indeed we will  do this later.
\item Even in the estimate above, one can replace the norm on $\bF$ as follows: $||\bF||^2_{L^2(0,T; \mathbf L^2(\Omega))} \mapsto ||\bF||_{H^1(0,T; \bV')}.$ In this case, the constant on the RHS corresponding to $\lesssim$ becomes dependent on time, i.e., $$\text{LHS} \le C(T)\text{RHS}~\text{ in \eqref{energyest}. }$$ 
\item Note, from the RHS, $d_0=[\nabla \cdot \bu+c_0p](0)$ does not explicitly appear; on the other hand, any two of the three of $\bu(0)=\bu_0$, $p(0)=p_0$, or $d_0$ may be specified, with the third obtained immediately thereafter.
\end{itemize}

Now, we proceed to obtain an abstract reduction of \eqref{main1**}, in line with what was done for \eqref{reducedBiot} for the purely elastic dynamics ($\delta_1=\delta_2=0$). This is the central insight in the analysis of these poro-visco-elastic dynamics. 

From the displacement equation \eqref{main1**}$_1$, we have: \begin{equation}\label{StructureODE}
	\delta_1\bu_t+\bu=\cE^{-1}\mathbf F-\alpha \cE^{-1}(\nabla p).
\end{equation}
{This equation can be explicitly solved for $\bu$ as an ODE in $t$ for $a.e.$~ $\mathbf x$} (see Lemma \ref{ODEsolve} below).
We may then differentiate the pressure equation in time: $$c_0p_{tt}+\alpha \nabla\cdot \bu_{tt}+Ap_t=S_t \in L^2(0,T;V').$$
We  then rewrite $\nabla \cdot \bu_{tt}$ through the time derivative of \eqref{main1}$_1$:
\begin{align*}
\delta_1\nabla \cdot \bu_{tt} =  -\nabla \cdot \bu_t+\nabla \cdot \cE^{-1}(\bF_t)+\alpha Bp_t =\alpha^{-1}[c_0p_t+Ap]-\alpha^{-1}S+\nabla \cdot \cE^{-1}(\bF_t)+\alpha Bp_t.
\end{align*}
Recalling the definition $B=-\nabla \cdot \cE^{-1}\nabla$, and taking \begin{equation} \widehat{ S}=\delta_1^{-1}[S-\alpha \nabla \cdot \cE^{-1}(\bF_t)]+S_t,\end{equation} we  have obtained a reduced pressure equation in this case:
\begin{equation} \label{strongdamp} c_0p_{tt}+\left[A+\delta_1^{-1}{(c_0I+\alpha^2B)}\right]p_t+\delta_1^{-1}Ap=\widehat S.\end{equation}

We are now in a position to make several salient observations about linear poro-visco-elastic dynamics: \begin{itemize} \item When $c_0>0$, we observe a {\em strongly damped} hyperbolic-type equation \cite{redbook,GilbertAcoustic}(and references therein). The damping operator $\mathbf D$ is given by \begin{equation}\label{dampop} \mathbf D \equiv A+\delta_1^{-1}{(c_0I+\alpha^2B)}=A+\delta_1^{-1}\mathcal B.\end{equation} It can be interpreted as $\mathbf D: V \to V'$ or from $\mathcal D(A) \to L^2(\Omega)$. This operator is nonlocal but has the property of being $A$-bounded in the sense of \cite[p.17]{trig2} (see also \cite{redbook}). Roughly, $\mathbf D$ being $A$-bounded means that $\mathbf D$ acts as $A$ does in its dissipative properties. 
\item We provide an explicit definition for weak solutions below for the reduced problem \eqref{strongdamp}.
\item There is clear singular behavior in the equation in the visco-elastic parameter $\delta_1\searrow 0$.
\item To obtain the reduced equation, we have differentiated the fluid source, $S$. We then inherit the requirement  that $S \in H^1(0,T;V')$ to make use of the reduced formulation. Consequently, we again need $\bF \in H^1(0,T;\mathbf \bV')$ when invoking the {abstract reduced form} in \eqref{strongdamp}. 
\item It is not obvious at this stage what the connection is between the primally specified initial quantities, $\bu(0)$ and  $d_0$, and those  standard ones for the strongly damped wave equation, $p(0)$ and $p_t(0)$; we resolve this below, with distinct theorems for these two different frameworks.
\item The formal energy identity  for the  reduced dynamics on $t\in[0,T]$ can be written:
\begin{align}\label{EED1C0}
	\frac{1}{2}\left [c_0\|p_t(T)\|^2 +\frac{1}{\delta_1}a(p(T),p(T))\right]&+\int_0^T\left[a(p_t,p_t)+
	\delta_1^{-1} \big(\mathcal Bp_t,p_t\big)_{L^2(\Omega)}\right]dt
	\nonumber \\ =&~\frac{1}{2}\left [c_0\|p_t(0)\|^2 +\frac{1}{\delta_1}a(p(0),p(0))\right ]+\int_0^T\langle \widehat{S},p_t\rangle_{V' \times V}dt.
\end{align}
We shall reference this later.
\end{itemize}

In the next subsection, we present a well-posedness and regularity theorem (in two parts) which is independent of $c_0 \ge 0$. In Theorem \ref{Thm-DWave-Energy1} we approach the problem through the full formulation and  provide  a priori estimates and various regularity assumptions on the data.  Secondly, in the  subsections that follow, we will consider $c_0>0$ and $c_0=0$ separately. For the full system with $c_0>0$, we have Theorem \ref{Thm-DWave-Energy2}. For the reduced formulation, we provide Theorem \ref{dampedSG}, which is achieved through the second-order semigroup theory, and requires specification of both $(p(0), p_t(0))$; from the obtained solution, we infer regularity about the ``natural" initial quantities. Following these theorems, we may compare the resulting regularity of the produced solutions. 

\subsubsection{General Well-posedness Result: $c_0\ge 0$}

\begin{theorem}\label{Thm-DWave-Energy1}
Consider $c_0 \ge 0$ in \eqref{main1**}. Let  $S\in L^2(0,T;V')$, $\bF\in H^1(0,T;\bV')\cup L^2(0,T;\mathbf L^2(\Omega))$. Take $\bu_0 \in \bV$ and $c_0p_0 \in L_0^2(\Omega)$. 
\vskip.1cm
\noindent [Part 1] Then there exists unique weak solution, ~$p \in L^2(0,T;V)$,~ $\bu \in H^1(0,T;\bV)$, and ~$[c_0p+\alpha \nabla \cdot \bu]_t \in L^2(0,T;V')$, as in Definition \ref{weaksol}. This solution is of finite energy, i.e., the identity \eqref{energyest} holds.

From the above facts, we also infer: 
\begin{itemize} \setlength\itemsep{.01cm}
\item $\bu \in C([0,T];\bV))$,
 \item $c_0p_t \in L^2(0,T;V')$,
 \item $c_0p \in C([0,T]; L_0^2(\Omega)).$ 
 \end{itemize}
\vskip.1cm
\noindent [Part 2] In addition to the previous assumptions,  take $p_0 \in V$ and $\bu_0 \in \mathcal D(\cE)$, and assume that $\bF \in H^1(0,T;\mathbf L^2(\Omega))$ and $S \in H^1(0,T;V')\cap L^2(0,T;L_0^2(\Omega))$. Then the above weak solution has the additional regularity that $$p \in H^1(0,T;L^2(\Omega)) \cap L^{\infty}(0,T;V) \cap L^2(0,T;\mathcal D(A))~~ ~\text{ and }~~~ \bu_t \in L^{\infty}(0,T;\bV).$$ The resulting solutions satisfy system \eqref{main1**} a.e. $\mathbf x$ a.e. $t$. 
Furthermore, if $\bF \in L^2(0,T;\bV)$ also, then we have  $\cE\bu \in H^1(0,T;\mathbf V)$ and $\bu \in H^2(0,T;\bV)$ in addition.
\end{theorem}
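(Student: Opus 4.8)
\textbf{Proof proposal for Theorem \ref{Thm-DWave-Energy1}.}

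\medskip

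The plan is to treat Part 1 and Part 2 separately, constructing solutions through Galerkin approximants and passing estimates to the limit. For \textbf{Part 1}, I would first construct a Galerkin scheme based on eigenfunctions of $\cE$ (for $\bu$) and of $A$ (for $p$), yielding finite-dimensional approximants $(\bu_N, p_N)$. Testing the discrete displacement equation with $\bu_{N,t}$ and the discrete pressure equation with $p_N$, then adding, produces the basic energy identity whose integrated form is precisely \eqref{energyest}; the RHS is controlled using $\bF \in L^2(0,T;\mathbf L^2(\Omega))$ (or, absorbing a time-dependent constant, $\bF \in H^1(0,T;\bV')$ via integration by parts in time as noted in the bullet after \eqref{energyest}) and Gr\"onwall. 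This gives uniform bounds $\bu_N$ in $L^\infty(0,T;\bV)$, $\sqrt{\delta_1}\,\bu_{N,t}$ in $L^2(0,T;\bV)$, $p_N$ in $L^2(0,T;V)$, and $\sqrt{c_0}\,p_N$ in $L^\infty(0,T;L^2(\Omega))$. Since $\delta_1 > 0$ is fixed here, $\bu_N$ is in fact bounded in $H^1(0,T;\bV)$, hence (Aubin--Lions, or simply the embedding $H^1(0,T;\bV)\hookrightarrow C([0,T];\bV)$) we extract $\bu \in H^1(0,T;\bV)\cap C([0,T];\bV)$ in the limit. Reading the pressure equation as $[c_0p+\alpha\nabla\cdot\bu]_t = S - Ap \in L^2(0,T;V')$ gives $\zeta_t \in L^2(0,T;V')$; combined with $\nabla\cdot\bu_t \in L^2(0,T;L^2(\Omega))$ this yields $c_0 p_t \in L^2(0,T;V')$ and thus $c_0 p \in C([0,T];L^2_0(\Omega))$ by the standard interpolation lemma. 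The limit passage through the (linear) weak form is routine, and the initial conditions are attained in the stated continuity classes. Uniqueness follows exactly as in the proof sketch of Theorem \ref{th:main2}: the difference of two solutions with zero data satisfies \eqref{WeakFormEq} homogeneously, and testing with $(\bu_t, \int_t p\,ds)$ — equivalently using the reduced equation \eqref{strongdamp} with the test function $\int_t p\,ds$ when $c_0 = 0$, and directly when $c_0 > 0$ — forces $p \equiv 0$ and then $\bu \equiv 0$.

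\medskip

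For \textbf{Part 2}, I would upgrade the estimates using the stronger data $p_0 \in V$, $\bu_0 \in \mathcal D(\cE)$, $\bF \in H^1(0,T;\mathbf L^2(\Omega))$, $S \in H^1(0,T;V')\cap L^2(0,T;L^2_0(\Omega))$. Differentiating the discrete pressure equation in time and testing with $p_{N,t}$, while testing the discrete displacement equation (or its time derivative) with $\bu_{N,t}$, leads to the higher-order identity — morally the identity \eqref{EED1C0} derived for the reduced equation, or equivalently the pair of identities in the proof sketch of Theorem \ref{strongBiot}. The key structural point, already isolated in the discussion, is that the damping operator $\mathbf D = A + \delta_1^{-1}\mathcal B$ is coercive on $V$ (since $a(\cdot,\cdot)$ is coercive and $\mathcal B = c_0 I + \alpha^2 B$ is monotone by Lemma \ref{Binvert}), so the term $\int_0^T (\mathbf D p_{N,t}, p_{N,t})\,dt$ controls $\|p_{N,t}\|_{L^2(0,T;V)}^2$ when $c_0 = 0$ and additionally $c_0\|p_{N,t}\|_{L^\infty(0,T;L^2)}^2$ when $c_0 > 0$; meanwhile $\frac{1}{2\delta_1}\frac{d}{dt}a(p_N,p_N)$ gives $\|p_N\|_{L^\infty(0,T;V)}^2$. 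The RHS involves $\widehat S$ and $\bF_t$; the hypotheses $S\in H^1(0,T;V')\cap L^2(0,T;L^2_0(\Omega))$ and $\bF \in H^1(0,T;\mathbf L^2(\Omega))$ are exactly what is needed to estimate $\int_0^T \langle \widehat S, p_{N,t}\rangle$ after integrating the $S_t$-part by parts in time, using the boundary terms $p_N(0)\in V$, $p_N(T)$. Once $p \in H^1(0,T;L^2(\Omega))\cap L^\infty(0,T;V)$ is secured, elliptic regularity for $A$ applied to $Ap = S - \zeta_t \in L^2(0,T;L^2(\Omega))$ gives $p \in L^2(0,T;\mathcal D(A))$, and elliptic regularity for $\cE$ applied to $\cE(\bu + \delta_1\bu_t) = \bF - \alpha\nabla p$ gives $\bu + \delta_1\bu_t \in L^\infty(0,T;\bV)$ — but since $\bu \in C([0,T];\bV)$ already, the ODE \eqref{StructureODE} (solved pointwise in $\mathbf x$ via Lemma \ref{ODEsolve}, or via Duhamel in $\bV$) propagates this to $\bu_t \in L^\infty(0,T;\bV)$. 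The final sentence, $\bF \in L^2(0,T;\bV)$ $\Rightarrow$ $\cE\bu \in H^1(0,T;\mathbf V)$ and $\bu \in H^2(0,T;\bV)$, follows by differentiating \eqref{main1**}$_1$ in time: $\cE(\bu_t + \delta_1\bu_{tt}) = \bF_t - \alpha\nabla p_t$, whose RHS is now in $L^2(0,T;\bV')$; applying $\cE^{-1}:\bV'\to\bV$ and invoking the ODE structure once more gives $\bu_{tt}\in L^2(0,T;\bV)$, and then $\cE\bu = \bF - \delta_1\cE\bu_t - \alpha\nabla p$ differentiated in time lands in $L^2(0,T;\mathbf V)$ provided $\bF\in L^2(0,T;\bV)$ — here one uses $\nabla p_t \in L^2(0,T;\mathbf V')$ together with the gain from the elliptic estimate.

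\medskip

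The \textbf{main obstacle} I anticipate is the Part 2 higher-order estimate: one must differentiate the pressure equation in time, but the natural test function $p_{N,t}$ requires care because $p_{N,t}(0)$ is \emph{not} freely prescribed — it is determined by the equation and the data $(p_0, \bu_0, \bF(0), S(0))$. Concretely, from the reduced form one reads $c_0 p_{N,t}(0)$ off the relation $c_0 p_t(0) = -\mathbf D p(0) - \delta_1^{-1}Ap(0)\cdot\delta_1 + \widehat S(0)$ (schematically), so when $c_0 = 0$ there is no $p_t(0)$ term at all and the identity \eqref{EED1C0} must be read without the $c_0\|p_t(0)\|^2$ term — one needs $\mathbf D^{-1}$-type control, i.e., coercivity of $\mathbf D$ on $V$, to even make sense of the first time-step. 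Establishing that $p_0 \in V$ and $\bu_0 \in \mathcal D(\cE)$ are \emph{compatible} enough to start the higher-order scheme (so that the discrete initial data $p_{N,0}$ converge strongly in $V$ and the discrete $p_{N,t}(0)$ stay bounded in the relevant norm) is the delicate bookkeeping step; everything else is a routine combination of the Galerkin method, the coercivity of $e(\cdot,\cdot)$, $a(\cdot,\cdot)$ and monotonicity of $B$, elliptic regularity for $\cE$ and $A$, and the pointwise-in-$\mathbf x$ ODE solve of Lemma \ref{ODEsolve}.
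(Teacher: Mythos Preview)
Your Part~1 construction is essentially the paper's approach. However, your uniqueness argument is unnecessarily indirect: because $\delta_1>0$, the very definition of weak solution (Definition~\ref{weaksol}) already gives $\bu_t\in L^2(0,T;\bV)$, so $(\bu_t,p)$ are admissible test functions for \emph{any} weak solution, and the energy identity \eqref{energyest} holds directly. Linearity then yields uniqueness with no need for the $\int_t p$ trick (which was required in Theorem~\ref{th:main2} precisely because $\bu_t$ is \emph{not} known to be a valid test function when $\delta_1=0$). Your mixed test pair $(\bu_t,\int_t p)$ would not produce cancelling cross terms, so as written that step does not close.

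For Part~2, the paper takes a different and more direct route than your reduced-equation/$\mathbf D$-operator approach. It stays with the \emph{full} system: differentiate the \emph{displacement} equation \eqref{main1**}$_1$ in time (not the pressure equation) and test with $\bu_t$; test the undifferentiated pressure equation \eqref{main1**}$_2$ with $p_t$. Adding, the cross terms $\alpha(\nabla p_t,\bu_t)+\alpha(\nabla\cdot\bu_t,p_t)$ cancel, and one obtains
\[
\dfrac{\delta_1}{2}\dfrac{d}{dt}e(\bu_t,\bu_t)+e(\bu_t,\bu_t)+c_0\|p_t\|^2+\dfrac{k}{2}\dfrac{d}{dt}\|\nabla p\|^2=(\bF_t,\bu_t)+(S,p_t),
\]
with $(S,p_t)$ handled by integration by parts in time (using $S\in H^1(0,T;V')$) to get an estimate valid for all $c_0\ge 0$. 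The only initial quantity needed on the RHS is $e(\bu_t(0),\bu_t(0))$, and $\bu_t(0)\in\bV$ is read off directly from the displacement equation at $t=0$:
\[
\delta_1\cE\bu_t(0)=\bF(0)-\alpha\nabla p_0-\cE\bu_0\in\bV'.
\]
This completely sidesteps the obstacle you identified --- the delicate $p_t(0)$ compatibility bookkeeping --- which is a genuine nuisance in the reduced-equation framework but simply does not arise here. Your route through \eqref{EED1C0} and coercivity of $\mathbf D$ would ultimately work, but it is more circuitous and imports exactly the initial-data subtlety that the paper's full-system multipliers avoid.
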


 In Part 1 of the Theorem above, there is not enough regularity to infer any spatial regularity of $p_t(0)$ from the equation. 
In Part 2, however, we can read-off the regularity of $p_t(0) \in V'$ (when $c_0>0$)  from the pressure equation a posteriori as follows: since $S(0)\in V'$ is defined for $S \in H^1(0,T;V')$:
$$c_0p_t(0) =S(0)-Ap(0)-\alpha \nabla \cdot \bu_t(0) \in V'.$$
We also obtain an initial condition for $\bu_t(0) \in \mathcal D(\cE)$ in Part 2 of the theorem via the elasticity equation and the regularity of $p_0, \bu_0,$ and $\bF$.

\begin{proof}[Proof of Theorem \ref{Thm-DWave-Energy1}] The construction of solutions in Part 1 of the theorem follows a standard approach via the a priori estimate (the baseline energy inequality) in \eqref{energyest}. (For instance, see the construction given through full spatio-temporal discretization given in \cite{bgsw}.) Uniqueness is obtained straightforwardly, as, for a weak solution, the function $\bu_t \in L^2(0,T;\bV)$ is a permissible test function in the elasticity equation \eqref{main1**}$_1$. (See the analysis in \cite{bgsw}.) This implies that {\em any weak solution} satisfies the energy inequality \eqref{energyest}. As the problem is linear, uniqueness then follows. 

To obtain Part 2, we point to the requisite a priori estimate for higher regularity. This estimate can be obtained in the discrete or semi-discrete framework (i.e., on Galerkin approximants, as in \cite{bw}), and the constructed solution  satisfies the resulting estimate. Uniqueness at the level of weak solutions remains. 
To obtain the a priori estimate, differentiate \eqref{main1**}$_1$ in time and test with $\bu_{t}$, then test \eqref{main1**}$_2$ with $p_t$. This produces the formal identities
\begin{align}
\dfrac{\delta_1}{2} \dfrac{d}{dt} e(\bu_t,\bu_t)+e(\bu_t,\bu_t)+\alpha(\nabla p_t,\bu_t)=&~( \bF_t, \bu_t) \\
c_0||p_t||^2+\alpha (\nabla \cdot \bu_t,p_t)+\dfrac{k}{2}\dfrac{d}{dt}||\nabla p||^2 =&~ ( S, p_t ).
\end{align}
We note that, from the second identity above, we can treat the term $(S,p_t)$ as an inner product (and absorb $p_t$ on the LHS) when $c_0>0$. However, to have a result which is independent of $c_0\ge 0$, we relax the regularity below. Doing so, adding the two identities, and integrating in time, we obtain:
 \begin{align*} 
\dfrac{\delta_1}{2}e(\bu_t(T),\bu_t(T))&+\dfrac{k}{2}||\nabla p(T)||^2+\int_0^T[e(\bu_t,\bu_t)+c_0||p_t||^2]dt \\ = &~\dfrac{\delta_1}{2}e(\bu_t(0),\bu_t(0))+\dfrac{k}{2}||\nabla p_0||^2 
 +\int_0^T ( \bF_t, \bu_t) dt 
 -\int_0^T\langle S, p\rangle_{V'\times V} dt + \langle S,p\rangle_{V'\times V}\Big|_{t=0}^{t=T}.
\end{align*}

\noindent The RHS and the data $S$, $\bF$, and $p_0$ have appropriate regularity to control the LHS. We note, of course, that at $t=0$ we have:
$$\cE(\bu_t(0))=-\alpha \nabla p_0-\cE(\bu_0)+\bF(0);$$ which is bounded in $\bV'$. Since $\cE: \bV \to \bV'$ is identified by the Riesz Isomorphism, this gives that
$$||\bu_t(0)||^2_{V} \lesssim ||p_0||_V^2+||\bu_0||_{\bV}^2+||\bF||^2_{H^1(0,T;\bV')}.$$
From the above, we obtain that solutions are bounded in the sense of $\bu_t \in L^{\infty}(0,T;\bV)$ and $p \in L^{\infty}(0,T;V) \cap H^1(0,T;L_0^2(\Omega))$. The remaining statements on regularity are read-off from the equations \eqref{main1**} for the data as prescribed respectively in the statement of Theorem \ref{Thm-DWave-Energy1}. 
\end{proof}
\begin{remark}
An alternative way to obtain the same result for more regular solutiosn is to invoke the test function $\cE(\bu_t)$ in the displacement equation \eqref{main1**}$_1$. Noting that the divergence operator commutes with the Laplacian, we observe ~$\bu \in H^1(0,T;\mathcal D(\cE))$, ~$p\in L^{\infty}(0,T;V)\cap L^2(0,T; \mathcal D(A))$, having specified initial conditions ~$p_0\in V$, $\bu_0\in \mathcal D(\cE).$
\end{remark}

\subsubsection{Compressible Constituents, $c_0>0$} \label{cpos} 
The equation \eqref{strongdamp} above, with $c_0>0$, is hyperbolic-like with strong damping; this renders the  entire system parabolic \cite{redbook,trig2}, with associated parabolic estimates. The damping operator $\mathbf D$ defined in \eqref{dampop} is $A$-bounded in the sense of \cite{trig2}. Equations of such type can also arise in acoustics---see e.g. \cite{GilbertAcoustic}. We elaborate below through several additional theorems. Before doing so, let us provide a clear definition of weak solutions to the reduced wave equation \eqref{strongdamp}. Namely, when $c_0>0$, we define $p$ to be a weak solution to 
$$c_0p_{tt}+\left[A+\delta_1^{-1}{\mathcal B}\right]p_t+\delta_1^{-1}Ap=\widehat S,$$
with $ \widehat{ S}=\delta_1^{-1}[S-\alpha \nabla \cdot \cE^{-1}(\bF_t)]+S_t$ and $\mathcal B = c_0I+\alpha^2B$,
to mean:
\begin{definition}[Weak Solution of Reduced, Strongly-Damped Wave]\label{df:weaksolution}
Let {$\widehat S\in L^2(0,T;V')$}. A weak solution of \eqref{strongdamp} is a function
~$p\in  H^1(0,T; V)\cap H^2(0,T; V')$
such that for a.e. $t>0$ and all $q \in V$, one has
\begin{equation}\label{weakform}
\langle c_0 p_{tt},q \rangle_{V'\times V}+ \langle \mathbf Dp_t,q\rangle_{V'\times V} + \delta_1^{-1} a(p,q) = \langle \widehat S,q\rangle_{V'\times V}, \end{equation}
where we interpret $\mathbf D = A+\delta_1^{-1}{(c_0I+\alpha^2B)}: V \to V'$ through the properties of the operators $A$ and $B$ given in Section \ref{opsec}.
\end{definition}

We  point out that the requirements that $S \in H^1(0,T;V')$ and $\bF \in H^1(0,T;\bV')$ are sufficient to guarantee that $\widehat S \in L^2(0,T;V')$.

In anticipation of the use of semigroup theory applied to the abstract presentation of the dynamics in \eqref{strongdamp} as a {\em wave-type equation}, we now address the following question: \begin{quote} What regularity can be obtained from the system when specifying, as an initial state, $p_t(0) = p_1$?\end{quote} The next theorem addresses this  through a priori estimates on the full system \eqref{main1**}, before we move to the semigroup theory for \eqref{strongdamp} in the later Theorem \ref{dampedSG}.

\begin{theorem}\label{Thm-DWave-Energy2}
	Let $S\in H^1(0,T;L_0^2(\Omega))$ and $\bF\in H^1(0,T;\bV')$. Consider initial data of the form ~$p(0) = p_0\in V$ and {$ p_t(0) =p_1 \in L_0^2(\Omega)$}. 
	
	Then there exists unique, finite energy {weak solution $p$, i.e., in the sense of Definition \ref{df:weaksolution} to \eqref{strongdamp}}, with the identity \eqref{EED1C0} holding {in $\mathscr D'(0,T)$}.
	 
Assuming, in addition, that $\bu_0\in \mathcal D(\cE)$ and {$\bF\in L^2(0,T;\mathbf L^2(\Omega))$}, one obtains a unique weak solution to the full system \eqref{main1**} in the sense of Definition \ref{weaksol}.The following additional statements hold:
	\begin{itemize}
		\item The unique solution $\bu$  to \eqref{StructureODE} has $\bu\in H^1(0,T;\mathcal D(\cE))$.
		\item $p\in L^{\infty}(0,T;\mathcal D(A))$.
		\item $S\in L^2(0,T;V) ~\implies~ Ap\in L^2(0,T;V)$.
	\end{itemize}
\end{theorem}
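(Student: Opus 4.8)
The plan is to treat the reduced strongly-damped wave equation \eqref{strongdamp} for $p$ first, then recover $\bu$ from the first-order ODE \eqref{StructureODE}, and finally extract the claimed regularity by inspecting the two PDEs. I will use throughout that $\widehat S\in L^2(0,T;V')$ — which follows, as already observed, from $S\in H^1(0,T;L_0^2(\Omega))\subset H^1(0,T;V')$ and $\bF\in H^1(0,T;\bV')$ — and that the nonlocal damping operator $\mathbf D=A+\delta_1^{-1}\mathcal B$ is coercive on $V$: since $B$ is monotone on $L_0^2(\Omega)$ (Lemma \ref{Binvert}) and $c_0>0$, one has $\langle\mathbf Dq,q\rangle_{V'\times V}=a(q,q)+\delta_1^{-1}(\mathcal Bq,q)_{L^2(\Omega)}\ge\|q\|_V^2+\delta_1^{-1}c_0\|q\|^2$.

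\emph{Reduced equation.} I would build $p$ by Galerkin approximation in a basis of $V$. Because $c_0>0$, each truncated system is a genuine linear second-order ODE system with $L^2(0,T)$ data, hence globally solvable; testing the $N$-th equation with $p_t^N$ reproduces exactly the differential form of \eqref{EED1C0}, and combining the coercivity of $\mathbf D$ with Young's inequality applied to $\langle\widehat S,p_t^N\rangle_{V'\times V}$ (absorbing $\tfrac12\|p_t^N\|_V^2$ into $a(p_t^N,p_t^N)$) bounds $p^N$ in $L^\infty(0,T;V)$ and $p_t^N$ in $L^\infty(0,T;L_0^2(\Omega))\cap L^2(0,T;V)$; solving $c_0p_{tt}^N=\widehat S-\mathbf Dp_t^N-\delta_1^{-1}Ap^N$ then bounds $p_{tt}^N$ in $L^2(0,T;V')$. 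Passing to the limit (the equation is linear, so the limits are identified routinely) yields $p\in H^1(0,T;V)\cap H^2(0,T;V')$, which is the class of Definition \ref{df:weaksolution}; the embedding $H^1(0,T;V)\hookrightarrow C([0,T];V)$ and the Lions--Magenes lemma ($p_t\in L^2(0,T;V)$ and $p_{tt}\in L^2(0,T;V')$ give $p_t\in C([0,T];L_0^2(\Omega))$) make the initial conditions $p(0)=p_0$, $p_t(0)=p_1$ meaningful and attained. Identity \eqref{EED1C0} then follows by taking $q=p_t(t)$ in \eqref{weakform} and using the same lemma to write $\langle c_0p_{tt},p_t\rangle=\tfrac{c_0}{2}\tfrac{d}{dt}\|p_t\|^2$ and $\delta_1^{-1}a(p,p_t)=\tfrac{1}{2\delta_1}\tfrac{d}{dt}a(p,p)$ in $\mathscr D'(0,T)$. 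Uniqueness is immediate: the difference of two solutions satisfies \eqref{EED1C0} with $\widehat S\equiv0$ and every term on the left-hand side is nonnegative, forcing $p_t\equiv0$ (using $c_0>0$) and $a(p,p)\equiv0$.

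\emph{Recovering $\bu$ and the full system.} Given $p$, Lemma \ref{ODEsolve} solves \eqref{StructureODE} with $\bu(0)=\bu_0$; explicitly $\bu(t)=e^{-t/\delta_1}\bu_0+\delta_1^{-1}\int_0^te^{-(t-s)/\delta_1}\,\cE^{-1}\big(\bF(s)-\alpha\nabla p(s)\big)\,ds$. Two facts drive all the remaining regularity: $\cE^{-1}$ is bounded $\mathbf L^2(\Omega)\to\mathcal D(\cE)$ and $\bV'\to\bV$, so $\bF\in L^2(0,T;\mathbf L^2(\Omega))$ gives $\cE^{-1}\bF\in L^2(0,T;\mathcal D(\cE))$ while $\bF\in H^1(0,T;\bV')\hookrightarrow C([0,T];\bV')$ \emph{also} gives $\cE^{-1}\bF\in L^\infty(0,T;\bV)$; moreover $p\in L^\infty(0,T;V)$ gives $\cE^{-1}\nabla p\in L^\infty(0,T;\mathcal D(\cE))$. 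The forcing of \eqref{StructureODE} therefore lies in $L^2(0,T;\mathcal D(\cE))$, so the convolution formula yields $\bu\in C([0,T];\mathcal D(\cE))$ and then $\bu_t=-\delta_1^{-1}\bu+\delta_1^{-1}\cE^{-1}(\bF-\alpha\nabla p)\in L^2(0,T;\mathcal D(\cE))$, i.e. $\bu\in H^1(0,T;\mathcal D(\cE))$ — the first bullet. That $(\bu,p)$ is a weak solution of \eqref{main1**} in the sense of Definition \ref{weaksol}: the displacement line of \eqref{WeakFormEq} is \eqref{StructureODE} tested against $\bv$, and for the pressure line one checks by a short substitution — inserting \eqref{strongdamp} and $\partial_t$ of \eqref{StructureODE} (after applying $\nabla\cdot\,\cE^{-1}$ and using $B=-\nabla\cdot\cE^{-1}\nabla$) — that the residual $\psi:=[c_0p+\alpha\nabla\cdot\bu]_t+Ap-S\in C([0,T];V')$ obeys $\psi_t=-\delta_1^{-1}\psi$ in $V'$; hence $\psi\equiv0$ exactly when $\psi(0)=0$, i.e. when $c_0p_1=S(0)-Ap_0-\alpha\nabla\cdot\bu_t(0)$ with $\bu_t(0)=\delta_1^{-1}\cE^{-1}(\bF(0)-\alpha\nabla p_0)-\delta_1^{-1}\bu_0$ — the compatibility relation linking $p_1$ to $(p_0,\bu_0)$, which is precisely what makes the two initialization options in the table of admissible data equivalent. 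Uniqueness of the full-system solution is then inherited from Theorem \ref{Thm-DWave-Energy1}.

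\emph{Regularity upgrades.} With \eqref{main1**}$_2$ in hand, $Ap=S-c_0p_t-\alpha\nabla\cdot\bu_t$. Now $S\in H^1(0,T;L_0^2(\Omega))\subset L^\infty(0,T;L_0^2(\Omega))$, $p_t\in C([0,T];L_0^2(\Omega))$ from the first step, and — the crucial point — $\nabla\cdot\bu_t\in L^\infty(0,T;L^2(\Omega))$, since $\bu_t=-\delta_1^{-1}\bu+\delta_1^{-1}\cE^{-1}(\bF-\alpha\nabla p)$ with $\bu$ and $\cE^{-1}\nabla p$ in $L^\infty(0,T;\mathcal D(\cE))$ and $\cE^{-1}\bF\in L^\infty(0,T;\bV)$. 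Hence $Ap\in L^\infty(0,T;L_0^2(\Omega))$ and elliptic regularity for $A$ gives $p\in L^\infty(0,T;\mathcal D(A))$. For the final bullet, if in addition $S\in L^2(0,T;V)$, then in the same identity $p_t\in L^2(0,T;V)$ and $\nabla\cdot\bu_t\in L^2(0,T;H^1(\Omega))$ (from $\bu_t\in L^2(0,T;\mathcal D(\cE))$, with the vanishing trace of $\bu_t$ forcing $\nabla\cdot\bu_t$ to have mean zero), so $Ap\in L^2(0,T;V)$. The one genuine subtlety in all of this is the bookkeeping just used: one must notice that the hypothesis $\bF\in H^1(0,T;\bV')$ by itself already puts $\cE^{-1}\bF$ in $L^\infty(0,T;\bV)$, which is exactly what upgrades $\nabla\cdot\bu_t$ from $L^2$ to $L^\infty$ in time and thereby yields $p\in L^\infty(0,T;\mathcal D(A))$ rather than merely $L^2(0,T;\mathcal D(A))$; one must also keep track of the compatibility $\psi(0)=0$ to be certain $(\bu,p)$ solves the \emph{un-differentiated} system \eqref{main1**} and not only its time derivative. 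The Galerkin construction itself is standard once the coercivity of the nonlocal damping $\mathbf D$ on $V$ is recorded.
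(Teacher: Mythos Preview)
Your proposal is correct and follows essentially the same route as the paper: Galerkin construction for the reduced strongly-damped wave equation \eqref{strongdamp} (with uniqueness via the energy identity, since $p_t$ is an admissible test function), recovery of $\bu$ from the ODE \eqref{StructureODE} in $\mathcal D(\cE)$, and then reading off $Ap\in L^\infty(0,T;L^2(\Omega))$ and $Ap\in L^2(0,T;V)$ from the un-differentiated pressure equation. Your treatment is considerably more detailed than the paper's terse sketch; in particular, your explicit compatibility check $\psi(0)=0$ linking $p_1$ to $(p_0,\bu_0)$ --- needed to ensure that $(\bu,p)$ solves the \emph{un-differentiated} system \eqref{main1**} and not merely its time derivative --- is a point the paper only discusses informally in the paragraph following the proof.
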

\begin{proof}[Proof of Theorem \ref{Thm-DWave-Energy2}]
	Since \eqref{strongdamp} has the form of the strongly damped wave equation, the construction of solutions is standard and we omit those details. We suffice to say, solutions can be  constructed via the Galerkin method, and  approximants satisfy energy identity \eqref{EED1C0}. Therefore, one obtains a weak solution as weak/weak-$*$ limits of the approximations. Moreover, uniqueness follows directly from the energy identity and linearity of the system---the regularity of $p_t$ is sufficient for it to be used as a test function for an arbitrary weak solution (unlike the case for the undamped wave equation). 
	
	The displacement $\bu$ is recovered by using obtained regularity of $p$ and solving ODE \ref{StructureODE} in time,  in the space $\mathcal D(\cE)$ (see Lemma \ref{ODEsolve} below).
	Additional regularity of $p$ is obtained by using equation \eqref{main1**}$_2$:
	$$
	A p= S-c_0p_t-\alpha\nabla\cdot \bu_t\in L^{\infty}(0,T;L^2(\Omega)).
	$$
	The last statement of the theorem also follows by noticing $p_t\in L^2(0,T;V)$ and $\nabla\cdot \bu_t\in L^2(0,T;V)$.
\end{proof}

Here we emphasize that, in the original formulation, our main poro-visco-elastic equations, \eqref{main1}, {\em it is not necessary to specify $p_t(0)$}. However, $p_t(0)$ can be formally obtained from $\bu_0$ and $d_0$ as we now describe. Let us consider $$d_0 = c_0p(0)+\nabla \cdot \bu(0) \in V.$$ And, correspondingly, assume that $\bF \in H^1(0,T;\mathbf L^2(\Omega))$ and take $\cE\bu(0) \in \mathbf L^2(\Omega)$ to be fully specified. As $\cE : \mathcal D(\cE) \to \mathbf L^2(\Omega)$ is an isomorphism, this provides $\bu(0) \in \mathbf H^2(\Omega)\cap \bV$, and we can back-solve from $d_0 \in V$ to obtain $p(0) \in V$, providing $\nabla p(0) \in L^2(\Omega)$. 
Then, again from \eqref{main1}$_1$, we read-off $\cE[\bu+\delta_1\bu_t](0) \in L^2(\Omega)$ and infer that $\bu_t(0) \in \mathbf H^2(\Omega)\cap \bV$ since $$\delta_1\cE \bu_t(0)=\bF(0)-\alpha\nabla p(0)-\cE\bu(0) \in \mathbf L^2(\Omega).$$ Finally, $p_t(0)$ can be read-off from the pressure equation, when the time trace $S(0) \in L^2(\Omega)$ is well-defined:
\begin{equation}
c_0p_t(0)=S(0)-{Ap(0)}-\alpha \nabla \cdot \bu_t(0) \in {{V'}}.
\end{equation}

Thus, we observe that the energy methods (and standard solutions, as in Theorem \ref{Thm-DWave-Energy1} Part 2) applied to the original system gives a different (lower) regularity of solutions than that obtained in Theorem \ref{Thm-DWave-Energy2}. Thus, by prescribing $p_t(0) \in L^2_0(\Omega)$ (instead of prescribing $\bu_0$) and invoking the wave structure, we obtain an improved regularity result in Theorem \ref{Thm-DWave-Energy2}.

Now, we proceed to invoke the semigroup theory for second-order abstract equations with strong damping. Our primary semigroup reference will be \cite{pazy}, and for the strongly damped wave equation \cite{trig3,trig2}. In this case, our damper $\mathbf D$ is $A$-bounded. This is typically written as $A \le \mathbf D \le A$, which we rewrite here as: There exists appropriate constants such that 
$$(Aq,q)_{L^2(\Omega)} \lesssim (\mathbf Dq,q)_{L^2(\Omega)} \lesssim (Aq,q)_{L^2(\Omega)},~~\forall q \in \mathcal D(A).$$
We do not provide an in-depth discussion of the correspondence of the existence of a $C_0$-semigroup on a given state space and associated solutions, instead referring to the  \cite[Chapter 4]{pazy}. 

We now provide the framework for Theorem \ref{dampedSG}.
\begin{itemize}
\item The strongly damped wave equation in \eqref{strongdamp} has a first order formulation considering the state $$y=[p, p_t]^T \in Y \equiv V \times L^2_0(\Omega)$$ written as
\begin{equation} \label{SGform} \dot y = \begin{bmatrix} 0 & I \\ -[\delta_1c_0]^{-1}A & -c_0^{-1}\mathbf D \end{bmatrix} y+\mathscr F,~~y(0)=[p_0, p_1]^T.\end{equation}
\item The operator $\mathbb A \equiv \begin{bmatrix} 0 & I \\ -[c_0\delta_1]^{-1}A & -c_0^{-1}\mathbf D \end{bmatrix}$ is taken with domain 
\begin{equation}\label{SGform2}\mathcal D(\mathbb A) \equiv \mathcal D(A) \times \mathcal D(A),\end{equation} and $\mathbf D$ as given in \eqref{dampop} with
$\mathscr F \equiv [0,c_0^{-1}\widehat S]^T$. 
\end{itemize}

The theorem below will provide the existence of a semigroup $e^{\mathbb A t} \in \mathscr L(Y)$. In this context, we will obtain a solution $y(t) = e^{\mathbb At}y_0$ to the first order formulation in two standard contexts: 
\begin{itemize} \item When $y_0 \in \mathcal D(\mathbb A)$, the resulting solution lies in $C^1((0,T];Y)\cap C^0([0,T];\mathcal D(\mathbb A))$ and satisfies \eqref{SGform} pointwise.
\item When $y_0 \in Y$, the resulting solution is $C^0([0,T];Y)$ and satisfies a time-integrated version of \eqref{SGform}; these solutions are sometimes called {\em generalized} or {\em mild} \cite{pazy}, and are, in fact, $C^0([0,T];Y)$-limits of solutions from the previous bullet. Namely, we can approximate the data $y_0 \in Y$ by $y_0^n \in \mathcal D(\mathbb A)$, and obtain the generalized solution as a $C^0([0,T];Y)$-limit of the solutions emanating from the $y_0^n$.
\item It is standard (in this linear context) to obtain {\em weak solutions} (in the sense of Definition \ref{df:weaksolution}) by considering smooth solutions emanating from $y_0^n \in \mathcal D(\mathbb A)$ as approximants. 
\end{itemize} 

\begin{theorem}
[Damped Semigroup Theorem]  \label{dampedSG} In the  framework of \eqref{SGform}--\eqref{SGform2}, with $\delta_1,c_0>0$, the  operator $\mathbb A$ generates a $C_0$-semigroup of contractions $e^{\mathbb At} \in \mathscr L(Y)$, which is {\em analytic} (in the sense of \cite[Chapter 2.5]{pazy}) on $Y\equiv V\times L^2_0(\Omega)$. This is to say:
\begin{itemize} \item For $[p_0,p_1]^T \in Y$ and $\widehat S \in L^2(0,T; L^2_0(\Omega))$, we obtain a unique (generalized) solution $[p(\cdot),p_t(\cdot)]^T \in C([0,T]; Y)$ (as described above). 
\item For $[p_0,p_1]^T \in \mathcal D(A) \times \mathcal D(A)$ and $\widehat S \in H^1(0,T;L^2_0(\Omega))$, we obtain a unique solution $[p(\cdot),p_t(\cdot)]^T \in C([0,T]; \mathcal D(A) \times \mathcal D(A)) \cap C^1((0,T]; Y))$ that satisfies the system in a point-wise sense. \end{itemize}

Lastly, one may select the state space $Z = L_0^2(\Omega) \times L_0^2(\Omega)$ with $\mathcal D(\mathbb A)$ the same as before. In this context, $\mathbb A$ again generates a $C_0$-semigroup $e^{\mathbb At} \in \mathscr L(Z)$. This semigroup is again analytic, though it is not a contraction semigroup. In this case, with $[p_0,p_1]^T \in (L_0^2(\Omega))^2$ we obtain solutions in the sense of $C^0([0,T];Z)$.

\end{theorem}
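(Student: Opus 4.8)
<br>

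The plan is to establish Theorem~\ref{dampedSG} by verifying the hypotheses of the Lumer--Phillips theorem (for the contraction/generation statement on $Y$) together with the standard resolvent-estimate characterization of analyticity from \cite[Chapter 2.5]{pazy}, and then transferring the conclusions to the alternate state space $Z$ by a perturbation/equivalence argument. First I would fix the state space $Y = V \times L^2_0(\Omega)$ with the natural energy inner product $\big([p,v],[\tilde p,\tilde v]\big)_Y = \delta_1^{-1}a(p,\tilde p) + c_0(v,\tilde v)_{L^2(\Omega)}$, which is equivalent to the product norm by the Poincar\'e--Wirtinger inequality (Section~\ref{opsec}). One computes, for $y=[p,v]^T\in\mathcal D(\mathbb A)$,
\begin{equation}
(\mathbb A y, y)_Y = \delta_1^{-1}a(v,p) - \delta_1^{-1}a(p,v) - c_0^{-1}(\mathbf D v, v)_{L^2(\Omega)} = -c_0^{-1}(\mathbf D v,v)_{L^2(\Omega)} \le 0,
\end{equation}
using self-adjointness of $a(\cdot,\cdot)$ and the fact that $\mathbf D = A + \delta_1^{-1}\mathcal B$ is monotone on $L^2_0(\Omega)$ (Lemma~\ref{Binvert} gives monotonicity of $B$, hence of $\mathcal B = c_0I+\alpha^2 B$, and $A$ is monotone). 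Thus $\mathbb A$ is dissipative on $Y$.

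Next I would verify the range condition: $\lambda I - \mathbb A$ is surjective onto $Y$ for some (hence all) $\lambda>0$. Given $[f,g]^T\in Y$, solving $(\lambda I - \mathbb A)[p,v]^T = [f,g]^T$ amounts to $v = \lambda p - f$ and then the elliptic-type problem $\lambda^2 c_0 p + \lambda \mathbf D p + \delta_1^{-1}A p = c_0 g + c_0\lambda f + \mathbf D f$ posed weakly in $V$; the associated bilinear form $\lambda^2 c_0(p,q) + \lambda(\mathbf D p,q) + \delta_1^{-1}a(p,q)$ is bounded and coercive on $V$ (coercivity from the $a(\cdot,\cdot)$ term, since $\mathbf D$ contributes a nonnegative term), so Lax--Milgram delivers $p\in V$, and elliptic regularity for $A$ (and the $A$-boundedness of $\mathbf D$) upgrades $p\in\mathcal D(A)$, hence $v\in\mathcal D(A)$ as well, so the solution lies in $\mathcal D(\mathbb A)$. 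Density of $\mathcal D(\mathbb A)$ in $Y$ is immediate since $\mathcal D(A)$ is dense in $V$ and in $L^2_0(\Omega)$. Lumer--Phillips then yields the $C_0$-semigroup of contractions. For analyticity, I would invoke the known result for abstract strongly damped wave equations with $A$-bounded damping \cite{trig2,trig3} (or verify the resolvent bound $\|(\lambda I-\mathbb A)^{-1}\|_{\mathscr L(Y)} \lesssim |\lambda|^{-1}$ on a sector directly via the same Lax--Milgram estimates, tracking $\lambda$-dependence), which applies precisely because $A \lesssim \mathbf D \lesssim A$ on $\mathcal D(A)$. The solution-regularity bullets (generalized solutions for $[p_0,p_1]^T\in Y$, classical solutions for data in $\mathcal D(\mathbb A)=\mathcal D(A)\times\mathcal D(A)$) are then the standard semigroup dictionary of \cite[Chapter 4]{pazy} together with analyticity improving the smoothing.

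Finally, for the alternate space $Z = L^2_0(\Omega)\times L^2_0(\Omega)$ with the same domain $\mathcal D(\mathbb A)$, I would argue that $\mathbb A$ is (up to a bounded perturbation and a change to an equivalent inner product) similar to a generator of an analytic semigroup on $Z$: the key point is that the part of $\mathbb A$ governing the first-order smoothing is $-c_0^{-1}\mathbf D$ in the second slot, which generates an analytic semigroup on $L^2_0(\Omega)$ since $\mathbf D$ is a positive self-adjoint operator there, while the off-diagonal coupling through $-[c_0\delta_1]^{-1}A$ and the identity is lower-order relative to the analytic structure; this is again covered by the abstract strongly-damped-wave theory \cite{trig2,trig3} on the ``lower'' energy level. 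Dissipativity fails on $Z$ (there is no sign-definite energy at this level), so one only gets a $C_0$-semigroup, not a contraction; analyticity persists. The main obstacle I anticipate is the bookkeeping for the $Z$-space statement --- making precise that the coupling terms are genuinely subordinate so that the analytic-semigroup perturbation theorem applies --- rather than anything in the $Y$-space argument, which is a textbook Lumer--Phillips-plus-analyticity verification. I would handle it by citing the $A$-bounded damped wave results of \cite{trig2,trig3} at the appropriate energy level rather than re-deriving the sectorial resolvent estimate from scratch.
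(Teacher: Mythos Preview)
Your proposal is correct and takes essentially the same route as the paper: both rely on the abstract strongly damped wave equation theory for $A$-bounded damping (the paper simply cites \cite[pp.~292--293]{redbook} and \cite{trig3}, while you unpack the Lumer--Phillips verification before invoking the same references for analyticity and the $Z$-space statement). One minor slip: in your dissipativity computation the coefficient on $(\mathbf D v,v)$ should be $-1$ rather than $-c_0^{-1}$ once the $c_0$ weight in the $Y$-inner product is accounted for, but the sign and conclusion are unaffected.
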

\begin{proof}[Proof of Theorem \ref{dampedSG}] The proof of this theorem follows immediately from the application of \cite[pp.292--293]{redbook} to the present framework to obtain the semigroup.  For the case of taking the state space $Z$, see also \cite{trig3}. In passing from the semigroup to solutions---taking into account the inhomogeneity $\mathscr F$---we invoke \cite[Chapter 4.2]{pazy}.
\end{proof}
We note that $S\in H^1(0,T;L^2_0(\Omega))$ and $\bF\in H^1(0,T;\bV')$ implies $\widehat S\in L^2(0,T;L^2_0(\Omega))$. Moreover, stronger assumptions $S\in H^2(0,T;L^2_0(\Omega))$ and $\bF\in H^2(0,T;\bV')$ imply $\widehat S \in H^1(0,T;L^2_0(\Omega))$.

We can say more, since the strongly damped wave equation is known to be exponentially stable. 
\begin{theorem}[Exponential Decay of Solutions]\label{decay}
Consider the above framework in \eqref{SGform}--\eqref{SGform2}, and take $\bF\equiv \mathbf 0$ and $S \equiv 0$ in \eqref{main1**} (so $\mathscr F \equiv [0,0]^T$). Consider $c_0,\delta_1>0$. Then the analytic semigroup $e^{\mathbb At}$ generated by $\mathbb A: Y \supset \mathscr D(\mathbb A) \to Y$ is uniformly exponentially stable. That is, there exists $\gamma, M_0, M_k>0$ (each depending on $c_0,\delta_1>0$) so that: 
\begin{equation}\label{decayestimate}
||e^{\mathbb At}||_{\mathscr L} \le M_0e^{-\gamma t},~~~\text{and, more generally,}~~~||\mathbb A^ke^{\mathbb At}||_{\mathscr L} \le M_kt^{-k}e^{-\gamma t},~~t \ge 0, ~k \in \mathbb N.
\end{equation}
\end{theorem}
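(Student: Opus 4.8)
The plan is to deduce uniform exponential stability from the location of the spectrum of $\mathbb{A}$, taking advantage of the fact that $e^{\mathbb{A}t}$ has already been shown to be analytic (Theorem \ref{dampedSG}). For analytic semigroups the spectral mapping property holds, so the growth bound equals the spectral bound: $\omega_0(e^{\mathbb{A}t}) = s(\mathbb{A}) := \sup\{\mathrm{Re}\,z : z \in \sigma(\mathbb{A})\}$. Hence the whole theorem reduces to showing that $s(\mathbb{A}) \le -\gamma_0$ for some $\gamma_0 = \gamma_0(c_0,\delta_1) > 0$, after which the two estimates in \eqref{decayestimate} follow from standard analytic-semigroup calculus. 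The first step is therefore to note that $\mathbb{A}$ has \emph{compact resolvent} on $Y = V \times L^2_0(\Omega)$: picking $z_0 \in \rho(\mathbb{A})$, the operator $(z_0-\mathbb{A})^{-1}$ maps $Y$ into $\mathscr D(\mathbb{A}) = \mathcal D(A) \times \mathcal D(A)$, and both embeddings $\mathcal D(A) \hookrightarrow V$ and $\mathcal D(A) \hookrightarrow L^2_0(\Omega)$ are compact by Rellich-Kondrachov on the bounded, smooth domain $\Omega$. Consequently $\sigma(\mathbb{A})$ consists only of isolated eigenvalues of finite multiplicity, and it suffices to locate the point spectrum.

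Next I would carry out the eigenvalue analysis. If $\mathbb{A}[p,q]^T = z[p,q]^T$ with $[p,q]^T \neq 0$, then $q = zp$ and $-\delta_1^{-1}Ap - z\mathbf{D}p = c_0 z^2 p$; pairing with $p$ in $L^2_0(\Omega)$ gives the scalar characteristic relation
\[
c_0\|p\|^2\,z^2 + (\mathbf{D}p,p)\,z + \delta_1^{-1}a(p,p) = 0 .
\]
Here $p \neq 0$ (otherwise $q = zp = 0$), so all three coefficients are strictly positive, using $c_0>0$, coercivity of $a(\cdot,\cdot)$ on $V$, and positivity of $\mathbf{D}$ (which is $A$-bounded, i.e.\ $c_1 a(q,q) \le (\mathbf{D}q,q) \le c_2 a(q,q)$). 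By the Routh-Hurwitz criterion both roots have negative real part; to make this quantitative one splits cases. In the underdamped regime the common real part is $\mathrm{Re}\,z = -(\mathbf{D}p,p)/(2c_0\|p\|^2) \le -c_1 c_P/(2c_0)$, where $c_P>0$ is the Poincar\'e-Wirtinger constant for $a(q,q) \ge c_P\|q\|^2$ on $V$. In the overdamped regime a short argument (the parabola is positive and increasing at $z = -\delta_1^{-1}a(p,p)/(\mathbf{D}p,p)$, which is to the right of its vertex) shows the root nearer the origin satisfies $z \le -\delta_1^{-1}a(p,p)/(\mathbf{D}p,p) \le -1/(\delta_1 c_2)$. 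Setting $\gamma_0 = \min\{c_1 c_P/(2c_0),\, 1/(\delta_1 c_2)\} > 0$ yields $\mathrm{Re}\,z \le -\gamma_0$ for every eigenvalue, hence $s(\mathbb{A}) \le -\gamma_0 < 0$ (in particular $0 \in \rho(\mathbb{A})$).

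Finally I would pass to the estimates. Analyticity gives $\omega_0(e^{\mathbb{A}t}) = s(\mathbb{A}) \le -\gamma_0$, hence $\|e^{\mathbb{A}t}\|_{\mathscr L} \le M_0 e^{-\gamma t}$ for any $0 < \gamma < \gamma_0$. For the higher-order bounds, combine analyticity with this decay: the standard analytic estimate gives $\|\mathbb{A}^k e^{\mathbb{A}t}\|_{\mathscr L} \le C_k t^{-k}$ on $0 < t \le 1$ (\cite[Ch.~2.5]{pazy}), while for $t \ge 1$ one writes $\mathbb{A}^k e^{\mathbb{A}t} = (\mathbb{A}^k e^{\mathbb{A}})\,e^{\mathbb{A}(t-1)}$ and uses $\|e^{\mathbb{A}(t-1)}\|_{\mathscr L} \le M_0 e^{-\gamma(t-1)}$; absorbing the bounded range $t \le 1$ into an exponential produces $\|\mathbb{A}^k e^{\mathbb{A}t}\|_{\mathscr L} \le M_k t^{-k} e^{-\gamma t}$ for all $t \ge 0$.

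The step I expect to be the main obstacle is not any single estimate but the need to be sure the argument controls the \emph{full} spectrum rather than only the eigenvalues --- this is exactly what the compact-resolvent observation supplies --- together with carefully tracking the constants coming from the nonlocal, $A$-bounded operator $\mathbf{D}$ (handled via Lemma \ref{Binvert} and the equivalence $c_1 A \le \mathbf{D} \le c_2 A$). An alternative, semigroup-spectral-theory-free route for the first estimate is the multiplier method: starting from the dissipation identity \eqref{EED1C0} with $\widehat S \equiv 0$, which gives $\tfrac{d}{dt}E = -(\mathbf{D}p_t,p_t)$ for $E = \tfrac12[c_0\|p_t\|^2 + \delta_1^{-1}a(p,p)]$, one forms the perturbed functional $\Phi_\varepsilon = E + \varepsilon c_0(p_t,p)$ and shows $\tfrac{d}{dt}\Phi_\varepsilon \lesssim -E$ for small $\varepsilon$ (using $\|p_t\|^2 \lesssim (\mathbf{D}p_t,p_t)$ and $(\mathbf{D}p_t,p) \lesssim (\mathbf{D}p_t,p_t)^{1/2} a(p,p)^{1/2}$) together with $\Phi_\varepsilon \simeq E$, giving $\|e^{\mathbb{A}t}\|_{\mathscr L} \le M_0 e^{-\gamma t}$ first on $\mathscr D(\mathbb{A})$ and then on all of $Y$ by density --- although the $\mathbb{A}^k$-estimates still require analyticity.
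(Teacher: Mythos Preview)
Your proof is correct and follows the same underlying logic as the paper's: analyticity of $e^{\mathbb A t}$ plus a spectral bound yields exponential decay, and analytic-semigroup calculus then gives the $\mathbb A^k$ estimates. The paper, however, does not carry any of this out---its entire proof is a one-line citation of \cite[Theorem~1.1(b)]{trig3}, a black-box result for strongly damped abstract wave equations with $A$-bounded damping. What you add is a self-contained verification of the ``requisite spectral criteria'': the compact-resolvent observation reducing the spectrum to eigenvalues, and the quantitative quadratic analysis bounding $\mathrm{Re}\,z$ away from zero uniformly via the constants in $c_1 A \le \mathbf D \le c_2 A$ and Poincar\'e--Wirtinger. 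Your alternative Lyapunov/multiplier route for the first estimate is also sound and is, in spirit, the same technique the paper uses later in Theorem~\ref{ODEdecay} for the $c_0=0$ case. So nothing is wrong; your write-up is simply a fleshed-out version of what the paper outsources to a reference.
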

\begin{proof}[Proof of Theorem \ref{decay}]
As the above semigroup is analytic, and the requisite spectral criteria are satisfied by the operator $\mathbb A$, exponential decay is inferred immediately from the \cite[Theorem 1.1(b), pp.20--21]{trig3}.
\end{proof}

In the above semigroup approach, we have constructed solutions in the variable $p$ via the semigroup approach. We now describe how to pass from the variable $p$ to $\bu$ via the ODE in \eqref{main1**}$_1$. This observation is central to subsequent sections: When a given pressure function $p$ is obtained in a regularity class for which the ODE 
$$	\delta_1\bu_t+\bu=\cE^{-1}\mathbf F-\alpha \cE^{-1}(\nabla p)$$
can be readily interpreted, we obtain a mapping $\nabla p \mapsto \bu$. When one has decay estimates as above in Theorem \ref{decay} these can be pushed  from $p$ to $\bu$ in the solution to the full system \eqref{main1**}.

\begin{lemma}\label{ODEsolve} Consider the ODE in \eqref{StructureODE}. 
Letting $$Q =\delta^{-1}[-\alpha\cE^{-1}(\nabla p)+\cE^{-1}(\bF)],$$ we have
$\bu_t+\delta_1^{-1} \bu=Q,$ which can be solved as:
\begin{equation} \label{soluODE}
\bu(\mathbf x,t)=e^{-t/\delta_1}\bu(\mathbf x,0)+\int_0^te^{[\tau-t]/\delta_1}Q(\mathbf x,\tau)d\tau.\end{equation}
\end{lemma}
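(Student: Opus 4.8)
The plan is to verify the closed-form solution formula \eqref{soluODE} by the classical integrating-factor method for a linear first-order ODE, carried out pointwise in $\mathbf x$ and treated as an ODE in $t$. First I would observe that, given the regularity of $p$ (and $\bF$) produced in whichever theorem is being invoked, the quantity $Q(\mathbf x, \cdot)=\delta_1^{-1}[-\alpha \cE^{-1}(\nabla p)+\cE^{-1}(\bF)]$ lies in an appropriate Bochner space—e.g. $L^2(0,T;\bV)$ or $L^2(0,T;\mathcal D(\cE))$—so that the scalar ODE $\bu_t+\delta_1^{-1}\bu = Q$ makes sense for a.e. $\mathbf x$, with $\bu$ absolutely continuous in $t$ valued in the relevant space. (Note a typographical slip in the statement: $Q$ should be defined with $\delta_1^{-1}$, matching \eqref{StructureODE}.)

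Next I would multiply through by the integrating factor $e^{t/\delta_1}$, rewrite the left side as $\tfrac{d}{dt}\big(e^{t/\delta_1}\bu\big)$, integrate from $0$ to $t$, and divide by $e^{t/\delta_1}$ to obtain \eqref{soluODE}. Because the operator $\cE^{-1}$ is bounded and time-independent, it commutes with the time integral, so the formula is valid in whatever spatial space $Q$ takes values; in particular the stated identity holds in $\mathcal D(\cE)$ when $\nabla p, \bF \in \mathbf L^2(\Omega)$ a.e.\ $t$, and in $\bV$ under the weaker hypotheses. I would then note uniqueness: any two solutions differ by a solution of the homogeneous equation $\bw_t = -\delta_1^{-1}\bw$, which is $e^{-t/\delta_1}\bw(0)$, forcing the difference to vanish if the initial datum is prescribed. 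Finally I would record that the convolution structure of \eqref{soluODE} immediately transfers decay: if $\|\nabla p(t)\|$ and $\|\bF(t)\|$ decay exponentially (as in Theorem \ref{decay} with $\bF\equiv 0$), then $\|Q(t)\|\lesssim e^{-\gamma t}$ and a standard Young-type estimate on the convolution integral gives $\|\bu(t)\|\lesssim (1+t)e^{-\min(\gamma,1/\delta_1)t}$ or simply $\lesssim e^{-\gamma' t}$ for any $\gamma'<\min(\gamma,1/\delta_1)$, which is the point of stating the lemma.

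There is essentially no analytic obstacle here—the only subtlety, and the thing I would be most careful about, is bookkeeping the function spaces: making sure that the regularity class in which a given solution $p$ has been constructed genuinely allows $\cE^{-1}\nabla p$ to be interpreted (so that $B = -\nabla\cdot\cE^{-1}\nabla$ and its cousins act as in Lemma \ref{Binvert}), and that the Bochner integral in \eqref{soluODE} converges in that space. Since $\cE^{-1}:\bV'\to\bV$ and $\cE^{-1}:\mathbf L^2(\Omega)\to\mathcal D(\cE)$ are both isomorphisms, this is immediate in each case of interest, so the lemma reduces to the one-line integrating-factor computation together with a remark on how it propagates regularity and decay from $p$ to $\bu$.
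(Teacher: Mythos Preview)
Your proposal is correct and follows the standard integrating-factor argument; in fact the paper states Lemma~\ref{ODEsolve} without proof, treating the variation-of-parameters formula as elementary, so your write-up is more detailed than what appears there. Your remarks on propagating regularity and decay through \eqref{soluODE} match exactly how the paper uses the lemma immediately afterward.
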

From Lemma \ref{ODEsolve}, one can pass the decay in Theorems \ref{dampedSG} and \ref{decay} (as well as Theorem \ref{Thm-DWave-Energy2}) from the variables $p$ to $\bu$ (and $\bu_t$). We provide an example for the state space norm below.

Take $\bF\equiv \mathbf 0$ and $S=0$. We will obtain a decay result for the displacement $\bu$ through the ODE.
 From (\ref{decayestimate}) we have that 
\begin{equation} \|y(t) \|_{Y} \leq M_0 e^{-\gamma t} \|y_0\|_Y \implies \|\grad p(t)\|_{L^2(\Omega)}  \leq M_0 e^{-\gamma t} \|y_0\|_Y \implies \|\cE^{-1}(\grad p(t))\|_{\mathcal{D}(\cE)} \leq M_0 e^{-\gamma t} \|y_0\|_Y.\end{equation}
Using (\ref{soluODE}) we obtain
$$\|\mathbf u(t) \|_{\mathcal{D}(\cE)}  \leq e^{-\frac{t}{\delta_1}}\|u_0\|_{\mathcal{D}(\cE)} + \frac{M_0}{1 - \gamma \delta_1}(e^{-t\gamma} - e^{-\frac{t}{\delta_1}})\|y_0\|_Y$$
Of course, the estimate above can be readily adjusted to accommodate the space in which $\bu_0$ is specified. 

\subsubsection{Incompressible Constituents}\label{czero}
\noindent We now take $c_0=\delta_2=0$ in \eqref{main1}, which, following the calculations of Section \ref{thissec}, yields the abstract dynamics:
\begin{equation} \label{ODEeq} \left[\delta_1A+{\alpha^2B}\right]p_t+Ap=\overline S,~~\text{ with }~\overline S = S-\alpha\nabla \cdot \cE^{-1}(\bF_t)+\delta_1S_t.\end{equation}
Note that $\overline S=\delta_1\widehat S$ from the previous section.
 In this case, we make a change of variable: \begin{equation}\label{CoV} q=[\alpha^2B+\delta_1A]p\end{equation} and proceed in the variable $q$. In this scenario, the operator $\alpha^2B+\delta_1A \in \mathscr L(V,V')$ and is boundedly invertible in this sense, following the properties of $A$ and $B$ in Section \ref{opsec}. Indeed, this follows immediately from: (i) Lax-Millgram on the strength of $A$ and (ii) the continuity and self-adjointness of $B$ on $L_0^2(\Omega)$. 
 
Under the change of variable, our abstract equation \eqref{ODEeq} can be written {as an ODE} in the variable $q$ as:
\begin{equation}\label{ode} q_t+A[(\alpha^2B+\delta_1A)^{-1}]q=\overline S \in L^2(0,T;V');~~q(0)=q_0\in V'.\end{equation}
The operator $R$ has that $$R\equiv A[(\alpha^2B+\delta_1A)^{-1}] \in \mathscr L(V') \cap \mathscr L(L_0^2(\Omega));$$ it is zeroth-order with $L^2(\Omega)$-adjoint
$$R^* = [(\alpha^2B+\delta_1A)^{-1}]A.$$ The ODE in \eqref{ode} can be interpreted either in the sense of $C(0,T;V')$ or $C(0,T;L_0^2(\Omega))$, depending on the regularity of the  data which compose $\overline S$---namely, whether we require $S, S_t$ and $\bF_t$ to take values in $L^2$ or $H^1$ type spaces.  In either case, the ODE can thus be solved in the context of a {\em uniformly continuous semigroup} \cite{pazy}. Here, the semigroup is $e^{-Rt} \in \mathscr L(X)$, where $X$ can be chosen to be $V'$ or $L^2_0(\Omega)$. 

For $\overline S \in L^2(0,T;V')$ and $q_0 \in V'$, the classical variation of parameters formula \cite{pazy} yields that $q \in H^1(0,T;V')$. However, $p \in H^1(0,T;V)$ is immediately obtained through inversion of the change of variables \eqref{CoV} $a.e.~t$. Then, the elasticity ODE can be explicitly solved in time as in Lemma \ref{ODEsolve},
providing $\bu \in H^1(0,T;\mathbf V)$. From this, we observe that $p(0)=p_0 \in V$ must be specified at the outset in order to possess an initial condition of the form $q(0)=q_0\in V'$. This reflects the fact that, since this cases reduces to an ODE, there is no spatial regularization provided by the pressure dynamics.
In this case, one only needs to specify $p(0)$ to obtain a solution to the abstract ODE in \eqref{ode}. This comes through the appearance of the combination $\delta_1\bu_t+\bu$ in the elasticity equation and the structure of the solution to the ODE in $\bu$. To recover the displacement variable $\bu$ from $p$, one must additionally specify $\bu(0)$ as well. Lastly, after solving the ODE in $q$ (and therefore for $p$), one can revert back to the ODE for $\bu$ to obtain additional temporal regularity (as a function of the regularity of $\bF$), since both sides of the equality below can be time-differentiated
 $$\delta_1\bu_t+\bu=\cE^{-1}[\bF-\alpha \nabla p].$$
 
Through this discussion, we have arrived at the following theorems.

\begin{theorem}[ODE Theorem] \label{ODEt1} Let $S \in H^1(0,T;V')$ and  $\bF \in H^1(0,T;\bV')$ ~(so that $\overline S = S-\alpha\nabla \cdot \cE^{-1}(\bF_t)+\delta_1S_t \in L^2(0,T;V'))$ and take $p_0 \in V$. 

Then there exists a unique (ODE) solution $p \in H^1(0,T; V)$ to \eqref{ode}, given by $$p(t) =[\alpha^2B+\delta_1A]^{-1}e^{-Rt}[\alpha^2B+\delta_1A]p_0.$$ 
\begin{itemize} 
\item If $S\in L^2(0,T;L^2_0(\Omega))$, then $p\in L^2(0,T; \mathcal D(A))$.
\item If $\bu_0 \in \mathcal D(\cE)$ and $\bF\in L^2(0,T;\mathbf L^2(\Omega))$, then there exists a unique weak solution $(p,\bu)$ to \eqref{main1}, with $p$ as before and $\bu \in H^1(0,T;\mathcal D(\cE))$. In this case, the formal energy equality in \eqref{EED1C0} holds for the weak solution $(p,\bu)$ with $c_0=0$.
\end{itemize}
\end{theorem}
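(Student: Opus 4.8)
The reduction already carried out makes \eqref{ode} an abstract ODE with a \emph{bounded} generator, so the $p$-component costs essentially nothing; the real content is the bounded invertibility of the change of variables and the location of the parabolic gain in the first bullet. First I would record that $R=A(\alpha^2B+\delta_1A)^{-1}$ is bounded on $V'$ (and on $L^2_0(\Omega)$): this is immediate from Lemma \ref{Binvert} together with the isomorphism $\alpha^2B+\delta_1A\colon V\to V'$ noted in the text (Lax--Milgram on the coercive form $a(\cdot,\cdot)$, continuity of $B$ on $L^2_0(\Omega)$). Hence $-R$ generates a uniformly continuous semigroup $e^{-Rt}$; the hypotheses give $\overline S\in L^2(0,T;V')$ and $q_0:=(\alpha^2B+\delta_1A)p_0\in V'$ (since $p_0\in V$), so variation of parameters yields $q\in C([0,T];V')$ and, reading $q_t=-Rq+\overline S$ off the equation, $q\in H^1(0,T;V')$. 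Applying the bounded map $(\alpha^2B+\delta_1A)^{-1}\colon V'\to V$, which commutes with $\partial_t$, produces $p\in H^1(0,T;V)$ solving \eqref{ode}; unwinding the change of variables in the homogeneous case gives the displayed formula $p(t)=[\alpha^2B+\delta_1A]^{-1}e^{-Rt}[\alpha^2B+\delta_1A]p_0$, and uniqueness of $p$ is the standard uniqueness for a linear ODE with bounded generator.

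For the first bullet I would \emph{not} bootstrap inside \eqref{ode}: the forcing $\overline S$ contains the rough term $\delta_1 S_t\in L^2(0,T;V')$ and $q_0$ contains the rough piece $\delta_1 Ap_0\in V'$ (recall $p_0\in V$, not $\mathcal D(A)$), so no $L^2(0,T;L^2_0)$ improvement is visible at the reduced level. Instead I would observe that, together with the companion displacement $\bu$ recovered from $p$ via the elasticity ODE of Lemma \ref{ODEsolve}, the pair satisfies the \emph{undifferentiated} pressure equation $[\alpha\nabla\cdot\bu]_t+Ap=S$, i.e.\ $Ap=S-\alpha\nabla\cdot\bu_t$. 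Since $\bu_t\in L^2(0,T;\bV)$ forces $\nabla\cdot\bu_t\in L^2(0,T;L^2_0(\Omega))$ (and, crucially, the only initial datum the divergence of $\bu$ ever sees is $\nabla\cdot\bu_0\in L^2_0(\Omega)$, which is as regular as needed), the added hypothesis $S\in L^2(0,T;L^2_0(\Omega))$ gives $Ap\in L^2(0,T;L^2(\Omega))$, and elliptic regularity for $A$ on the smooth domain (Neumann condition encoded in $\mathcal D(A)$) yields $p\in L^2(0,T;\mathcal D(A))$.

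For the second bullet, with $\bu_0\in\mathcal D(\cE)$ and $\bF\in L^2(0,T;\mathbf L^2(\Omega))$, the right-hand side $\cE^{-1}[\bF-\alpha\nabla p]$ of the elasticity ODE lies in $L^2(0,T;\mathcal D(\cE))$ (using $\nabla p\in L^2(0,T;\mathbf L^2(\Omega))$ from $p\in L^2(0,T;V)$), so Lemma \ref{ODEsolve} gives $\bu\in H^1(0,T;\mathcal D(\cE))$ with $\bu_t\in L^2(0,T;\mathcal D(\cE))$. One then checks $(p,\bu)$ meets Definition \ref{weaksol}: $\zeta_t=\alpha\nabla\cdot\bu_t\in L^2(0,T;V')$, the initial conditions are attained in the stated weak sense, and \eqref{WeakFormEq} follows by undoing the reduction; uniqueness among all weak solutions follows as in \cite{bgsw} (for any weak solution $\bu_t\in L^2(0,T;\bV)$ is an admissible test function, forcing the energy identity, and linearity closes the argument). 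Finally, the identity \eqref{EED1C0} with $c_0=0$ is obtained by testing \eqref{strongdamp} (equivalently \eqref{ODEeq}) with $p_t\in L^2(0,T;V)$---admissible precisely because of the gained regularity $p\in H^1(0,T;V)$---using $a(p,p_t)=\frac{1}{2}\frac{d}{dt}a(p,p)$ and $\widehat S=\delta_1^{-1}\overline S$.

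\textbf{Main obstacle.} The ODE solve itself is the easy part; the delicate point is the bookkeeping it forces. Because \eqref{ode} is a \emph{genuine ODE} there is no parabolic smoothing, so the initial state must be prescribed at output regularity ($p_0\in V$, giving $q_0\in V'$); the parabolic gain $p\in L^2(0,T;\mathcal D(A))$ is invisible to \eqref{ode} and must be extracted from the coupled, undifferentiated system; and, when one asks $(p,\bu)$ to solve \eqref{main1} rather than merely the pressure ODE, the relationship between the primally-specified quantities and $p_0$ has to be tracked carefully (the pressure equation at $t=0$ constrains $\nabla\cdot\bu_0$, so $\bu_0$ is not fully independent of $p_0$). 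Keeping this straight is exactly the admissible-initial-data question the paper foregrounds; everything else is routine bounded-operator semigroup theory and elliptic regularity.
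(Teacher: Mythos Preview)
Your overall strategy matches the paper's: the discussion preceding the theorem in Section~\ref{czero} \emph{is} the paper's entire proof, and you follow it closely (bounded generator $R$, uniformly continuous semigroup on $V'$, recover $p$ via the isomorphism $(\alpha^2 B+\delta_1 A)^{-1}\colon V'\to V$, then solve the displacement ODE via Lemma~\ref{ODEsolve}). For the main statement and the second bullet your argument is sound and essentially identical to the paper's sketch.

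There is, however, a genuine gap in your treatment of the first bullet---one you half-acknowledge in the ``Main obstacle'' paragraph but do not close. You propose to read $Ap=S-\alpha\nabla\cdot\bu_t$ off the \emph{undifferentiated} pressure equation for some companion $\bu$ with $\bu_0\in\bV$. But the pair $(p,\bu)$ built from the $p$-ODE and the elasticity ODE does \emph{not} automatically satisfy that equation: writing $g:=\alpha\nabla\cdot\bu_t+Ap-S$, one computes $g_t=-\delta_1^{-1}g$, so $g(t)=e^{-t/\delta_1}g(0)$, and the compatibility $g(0)=0$ forces
\[
\nabla\cdot\bu_0=\nabla\cdot\cE^{-1}\bF(0)+\alpha Bp_0+\tfrac{\delta_1}{\alpha}\big[Ap_0-S(0)\big].
\]
For $p_0\in V\setminus\mathcal D(A)$ and $S\in H^1(0,T;V')$, the bracket lies only in $V'$, so no $\bu_0\in\bV$ with this divergence exists; your parenthetical ``$\nabla\cdot\bu_0\in L^2_0(\Omega)$ is as regular as needed'' is precisely where the argument breaks. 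The obstruction is in fact intrinsic to the statement: since $Ap(t)=\delta_1^{-1}\big[q(t)-\alpha^2 Bp(t)\big]$ and $e^{\pm Rt}$ leave $L^2_0(\Omega)$ invariant, one has $q(t)\in L^2_0(\Omega)\iff q_0\in L^2_0(\Omega)\iff p_0\in\mathcal D(A)$; hence with $\overline S=0$ and $p_0\in V\setminus\mathcal D(A)$ the conclusion $p\in L^2(0,T;\mathcal D(A))$ fails outright. The paper's own one-line justification (``a posteriori through elliptic regularity'') is no more detailed than yours, so the gap is shared; the clean repair is to strengthen the hypothesis to $p_0\in\mathcal D(A)$, which is exactly the $m=0$ input of Theorem~\ref{ODEt2}.
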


\begin{theorem}[Regularity for ODE ]\label{ODEt2}
	Let $m\in\N$, $S\in H^1(0,T;H^{m}(\Omega)\cap L^2_0(\Omega))$ and ~$\bF\in H^1(0,T;\boldsymbol H^{m-1}(\Omega))$ (so that $\overline S = S-\alpha\nabla \cdot \cE^{-1}(\bF_t)+\delta_1S_t \in L^2(0,T;H^m(\Omega))),$ and take $p(0)\in H^{m+2}(\Omega)\cap V$. 
	
	Then the ODE solution $p$ to \eqref{ode} has $p\in H^1(0,T;H^{m+2}(\Omega))$.
\begin{itemize}
	\item If $S\in L^2(0,T;H^{m+1}(\Omega))$, then $p\in L^2(0,T; H^{m+3}(\Omega)))$.
	\item If  $\bu_0 \in H^{m+1}\cap\bV$, then the weak solution $(p,\bu)$ to \eqref{main1}, satisfies an additional regularity property $\bu \in H^1(0,T; \boldsymbol H^{m+1}(\Omega))$.
	\item If $\bu(0) \in H^{m+3}\cap\bV$ and $\bF\in L^2(0,T;\boldsymbol H^{m+1}(\Omega))$, then the weak solution $(p,\bu)$ to \eqref{main1}, satisfies an additional regularity property $\bu \in H^1(0,T; \boldsymbol H^{m+3}(\Omega))$.
\end{itemize}
\end{theorem}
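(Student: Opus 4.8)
The plan is to exploit the abstract reduction of Section~\ref{czero}: when $c_0=\delta_2=0$ the dynamics collapse to a \emph{scalar ODE in the pressure alone}. Undoing the change of variables \eqref{CoV} in \eqref{ode}, set $\mathcal L:=\alpha^2 B+\delta_1 A$ and $\widetilde R:=\mathcal L^{-1}A$; then $p$ solves $p_t+\widetilde R p=\mathcal L^{-1}\overline S$ with $p(0)=p_0$, and this is the same unique solution produced by Theorem~\ref{ODEt1} (so well-posedness is already in hand and only the \emph{regularity} must be upgraded). Because $\mathcal L^{-1}$ is a fixed, time-independent operator, everything reduces to: (1) an elliptic-regularity lemma showing $\mathcal L^{-1}$ gains two spatial derivatives at every Sobolev level; (2) consequent boundedness of $\widetilde R$ (and of the paper's $R=A\mathcal L^{-1}$) on $H^{m+2}(\Omega)\cap\mathcal D(A)$, resp. $H^m(\Omega)\cap L^2_0(\Omega)$; (3) a soft variation-of-parameters argument for the $p$-ODE on that space; (4) feeding the gained regularity of $\nabla p$ into the elasticity ODE \eqref{soluODE} for the $\bu$-statements; (5) a second pass through the pressure equation with the sharper source for the $H^{m+3}$ refinement.

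Step~(1) is the heart of the matter, and the step I expect to require the most care. The operator $A=-k\Delta$ is order-two elliptic with the homogeneous Neumann realization baked into $\mathcal D(A)$, while $B=-\nabla\cdot\cE^{-1}\nabla$ is, by elliptic regularity for the Lam\'e operator $\cE$ on the smooth domain $\Omega$ (namely $\cE^{-1}:\mathbf H^{s-1}\to\mathbf H^{s+1}$), a \emph{bounded} map of $H^s(\Omega)$ into itself for every $s\ge 0$. Thus $\mathcal L=\delta_1 A+\alpha^2 B$ is a second-order elliptic operator modulo a bounded, purely lower-order (though nonlocal) perturbation, and the Lax--Milgram/self-adjointness argument already recorded in the excerpt gives the isomorphisms $\mathcal L:V\to V'$ and $\mathcal L:\mathcal D(A)\to L^2_0(\Omega)$. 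Starting from $w=\mathcal L^{-1}g\in\mathcal D(A)$ for $g\in H^m(\Omega)\cap L^2_0(\Omega)$ and writing $\delta_1 Aw=g-\alpha^2 Bw$, interior-plus-boundary elliptic regularity for the Neumann Laplacian, bootstrapped $\lceil m/2\rceil$ times (using $Bw\in H^j$ whenever $w\in H^j$, with no boundary obstruction since $Bw$ enters only as data and the condition $\partial_\nu w=0$ is fixed throughout), yields $w\in H^{m+2}(\Omega)$ with $\|w\|_{H^{m+2}}\lesssim\|g\|_{H^m}$. Hence $\mathcal L^{-1}\in\mathscr L\!\big(H^m\cap L^2_0,\ H^{m+2}\cap\mathcal D(A)\big)$ and, composing with $A$, $\widetilde R$ and $R$ are bounded on $H^{m+2}(\Omega)\cap\mathcal D(A)$ and on $H^m(\Omega)\cap L^2_0(\Omega)$. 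This is precisely where a boundary-condition subtlety could bite: one must read $p(0)\in H^{m+2}(\Omega)\cap V$ as also carrying the Neumann compatibility $\partial_\nu p(0)=0$ (so that $q_0=\mathcal L p(0)$ genuinely lies in $H^m\cap L^2_0$ and not merely in $V'$), and one must verify that the composite source $\overline S=S-\alpha\nabla\cdot\cE^{-1}(\bF_t)+\delta_1 S_t$ lands in $L^2(0,T;H^m(\Omega)\cap L^2_0(\Omega))$ under the stated hypotheses.

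Granting Step~(1), Steps~(2)--(3) are immediate. Since $\widetilde R$ is bounded on $H^{m+2}(\Omega)\cap\mathcal D(A)$ it generates a uniformly continuous semigroup there; with $\mathcal L^{-1}\overline S\in L^2(0,T;H^{m+2}\cap\mathcal D(A))$ and $p_0\in H^{m+2}\cap\mathcal D(A)$, the variation-of-parameters formula gives $p\in C([0,T];H^{m+2})$ and then $p_t=\mathcal L^{-1}\overline S-\widetilde R p\in L^2(0,T;H^{m+2})$, i.e. $p\in H^1(0,T;H^{m+2}(\Omega))$ (the $m=0$ instance recovering Theorem~\ref{ODEt1}). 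For Step~(4), feed $p\in H^1(0,T;H^{m+2})$ into Lemma~\ref{ODEsolve}: $Q=\delta_1^{-1}[\cE^{-1}\bF-\alpha\cE^{-1}\nabla p]$ inherits the regularity of its roughest summand, and the explicit formula \eqref{soluODE} together with $\bu(0)$ in the prescribed space yields $\bu\in H^1(0,T;\mathbf H^{m+1})$ when $\bu_0\in\mathbf H^{m+1}\cap\bV$, and $\bu\in H^1(0,T;\mathbf H^{m+3})$ when moreover $\bF\in L^2(0,T;\mathbf H^{m+1})$ and $\bu(0)\in\mathbf H^{m+3}\cap\bV$ (so that the $\cE^{-1}\bF$ term no longer limits the gain). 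Finally, for the first bullet (Step~(5)), return to the pressure equation written as $Ap=\overline S-\mathcal L p_t$ (equivalently $Ap=S-\alpha\nabla\cdot\bu_t$, since $c_0=0$): the already-established regularity of $p_t$ and of $\bu_t$ from \eqref{soluODE}, combined with $S\in L^2(0,T;H^{m+1})$ and the attendant upgrade of $\bF$, places the right-hand side in $L^2(0,T;H^{m+1})$, whence elliptic regularity for $A$ gives $p\in L^2(0,T;H^{m+3})$. Throughout, the only genuine obstacle is the elliptic/boundary bookkeeping of Step~(1) together with tracking which initial-data and source regularities are consumed at each level; the temporal part stays soft and ODE-like precisely because $c_0=\delta_2=0$ removes all parabolic smoothing.
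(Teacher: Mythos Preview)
Your proposal is correct and follows essentially the same route as the paper. The paper does not give a separate proof of Theorem~\ref{ODEt2}: it is subsumed in the discussion preceding the two theorems (``Through this discussion, we have arrived at the following theorems'') together with the remark afterward that ``additional regularity is obtained a posteriori through elliptic regularity''---i.e., solve the ODE \eqref{ode} via the uniformly continuous semigroup on the appropriate space, invert \eqref{CoV}, and feed into \eqref{soluODE}; the only cosmetic difference is that you work directly in $p$ with $\widetilde R=\mathcal L^{-1}A$ rather than in $q$ with $R=A\mathcal L^{-1}$, and since these are conjugate under $\mathcal L$ the arguments are equivalent, with your Step~(1) bootstrap and the Neumann-compatibility caveat on $p(0)$ making explicit what the paper leaves to the reader.
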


We note that these theorems are obtained through the reduced formulation in \eqref{ODEeq}, which itself is obtained by time-differentiating the original equations (and the data). In some sense, then, we are lowering the regularity of the solution. However, some additional regularity is obtained a posteriori through elliptic regularity.

Finally, we discuss decay in the incompressible constituents case, which is not immediate. First, by Theorem \ref{Thm-DWave-Energy1}, the energy estimate \eqref{energyest} holds for $c_0=0$ solutions: 
\begin{align}\nonumber
||\bu||^2_{L^{\infty}(0,T;\bV)}+\delta_1||\bu_t||^2_{L^2(0,T;\bV)}+||p||^2_{L^2(0,T;V)} 
\lesssim  ||\bu_0||_V^2& +||S||^2_{L^2(0,T;V')} + ||\bF||^2_{L^2(0,T; \mathbf L^2(\Omega))}.
\end{align}
The above, of course, indicates dissipation in both variables $p$ and $\bu$, yet the pointwise-in-time quantity, $||p(t)||^2_{L_0^2(\Omega)}$, has disappeared. As this case considers an ODE for $q$ (on either $V'$ or $L^2_0(\Omega)$), the spectral properties of the operator $R$---on the respective space---would dictate decay in $q$. From that point of view, we only remark that: (i) the operator $R : V' \to V'$ is non-negative, and (ii) $0$ is not an eigenvalue of $R$.

However, we can  directly observe exponential decay in this case through a multiplier method for the whole system. Indeed, with solutions in hand from Theorems \ref{ODEt1} and \ref{ODEt2}, we can reconstruct the weak solution to \eqref{main1} and then selectively test the equations using wave-type stabilization arguments. We present this argument here.
\begin{theorem}[Exponential ODE Stability] \label{ODEdecay}
Consider $S \equiv 0$ and $\mathbf F \equiv 0$ in \eqref{main1} (so $\overline S \equiv 0$ in \eqref{ODEeq}). Suppose that $\bu_0, \bu_t(0) \in \mathbf V$ and $p_0 \in V$. Then, there exists $C>0$ and $\gamma>0$ so that we have the estimate for all $t \ge 0$:
\begin{equation}
||\bu(t)||_{\mathbf V}^2+||\mathbf u_t(t)||_{\mathbf V}^2+||p(t)||_{V}^2 \le C[||\bu_0||_{\mathbf V}^2+||\mathbf u_t(0)||_{\mathbf V}^2+||p_0||_{V}^2] e^{-\gamma t}
\end{equation}
\end{theorem}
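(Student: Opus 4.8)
The plan is to prove exponential decay directly at the level of the full system \eqref{main1} with $c_0=0$, $\delta_2=0$, $S\equiv0$, $\bF\equiv0$, i.e.\ for
\begin{equation}\label{decaysystem}
\begin{cases}
\cE\bu+\delta_1\cE\bu_t+\alpha\nabla p=0,\\
[\alpha\nabla\cdot\bu]_t+Ap=0,
\end{cases}
\end{equation}
using multiplier (energy-method) arguments in the spirit of stabilization for strongly damped wave equations. First I would establish the basic energy identity: testing \eqref{decaysystem}$_1$ with $\bu_t$ and \eqref{decaysystem}$_2$ with $p$, and using $(\alpha\nabla p,\bu_t)+(\alpha\nabla\cdot\bu_t,p)=0$ after integration by parts, yields
\begin{equation}\label{basicE}
\frac{d}{dt}\Big[\frac12 e(\bu,\bu)\Big]+\delta_1 e(\bu_t,\bu_t)+a(p,p)=0,
\end{equation}
so the natural ``energy'' $E_0(t)=\tfrac12 e(\bu,\bu)$ is nonincreasing with dissipation $\delta_1 e(\bu_t,\bu_t)+a(p,p)$. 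This controls $\|\bu\|_{\bV}$ but not yet $\|\bu_t\|_{\bV}$ or $\|p\|_V$ pointwise, so a second, higher-order identity is needed: differentiate \eqref{decaysystem}$_1$ in time and test with $\bu_t$, test the (undifferentiated) \eqref{decaysystem}$_2$ with $p_t$, and add, giving
\begin{equation}\label{higherE}
\frac{d}{dt}\Big[\frac{\delta_1}{2}e(\bu_t,\bu_t)\Big]+e(\bu_t,\bu_t)+\frac{k}{2}\frac{d}{dt}\|\nabla p\|^2=0,
\end{equation}
using again the skew cancellation $\alpha(\nabla p_t,\bu_t)+\alpha(\nabla\cdot\bu_t,p_t)=0$. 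Thus $E_1(t)=\tfrac{\delta_1}{2}e(\bu_t,\bu_t)+\tfrac{k}{2}\|\nabla p\|^2$ also decays, with dissipation $e(\bu_t,\bu_t)$. The admissibility of $\bu_t,p_t$ as test functions is guaranteed by the regularity in Theorems \ref{ODEt1} and \ref{ODEt2} (or one argues on Galerkin approximants and passes to the limit), which is why the hypotheses $\bu_0,\bu_t(0)\in\bV$, $p_0\in V$ are imposed.

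The remaining task is to convert the dissipation inequalities into exponential decay, and the obstacle is that the dissipation in \eqref{basicE}--\eqref{higherE} does not obviously dominate the full energy $\mathcal E(t)=e(\bu,\bu)+\delta_1 e(\bu_t,\bu_t)+k\|\nabla p\|^2$: the dissipation of $E_1$ is only $e(\bu_t,\bu_t)$, and the dissipation of $E_0$ gives $e(\bu_t,\bu_t)+a(p,p)$ but $a(p,p)=k\|\nabla p\|^2$ is exactly the non-decaying-looking term in $E_1$. The resolution is a multiplier that extracts $e(\bu,\bu)$ itself. Since $\delta_1\bu_t+\bu=-\alpha\cE^{-1}\nabla p$, testing \eqref{decaysystem}$_1$ with $\bu$ directly gives $e(\bu,\bu)=-\delta_1 e(\bu_t,\bu)-\alpha(\nabla p,\bu)=-\delta_1 e(\bu_t,\bu)+\alpha(p,\nabla\cdot\bu)$, and $\|\nabla\cdot\bu\|\lesssim\|\bu\|_{\bV}$ together with Young's inequality yields
\begin{equation}\label{uabsorb}
e(\bu,\bu)\lesssim \delta_1^2 e(\bu_t,\bu_t)+k\|\nabla p\|^2.
\end{equation}
Separately, to control $k\|\nabla p\|^2$ by dissipative quantities one uses \eqref{decaysystem}$_2$: $Ap=-\alpha\nabla\cdot\bu_t$, so $a(p,p)=(Ap,p)=-\alpha(\nabla\cdot\bu_t,p)\le \varepsilon\|p\|^2+C_\varepsilon\|\bu_t\|^2\lesssim \varepsilon k\|\nabla p\|^2+C_\varepsilon e(\bu_t,\bu_t)$ (Poincaré--Wirtinger on $V$ and Korn/Poincaré on $\bV$), hence $k\|\nabla p\|^2\lesssim e(\bu_t,\bu_t)$.

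Putting these together, I would define the perturbed Lyapunov functional $\mathcal L=E_0+\beta E_1$ (or $\mathcal L=N\mathcal E_{\text{tot}}$ plus a small cross term) for a suitable constant $\beta>0$; its derivative is bounded above by a negative multiple of $e(\bu_t,\bu_t)+a(p,p)$, which by \eqref{uabsorb} and the bound $k\|\nabla p\|^2\lesssim e(\bu_t,\bu_t)$ dominates $\mathcal E(t)$, giving $\frac{d}{dt}\mathcal L\le -c\,\mathcal E\le -c'\mathcal L$ after checking $\mathcal L\sim\mathcal E$; Grönwall then yields $\mathcal E(t)\le C\mathcal E(0)e^{-\gamma t}$, which is exactly the claimed estimate since $\mathcal E(t)$ is equivalent to $\|\bu\|_{\bV}^2+\|\bu_t\|_{\bV}^2+\|p\|_V^2$ (using norm equivalences from Section \ref{opsec}). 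The main obstacle I anticipate is the bookkeeping of constants to ensure $\mathcal L$ stays equivalent to $\mathcal E$ while $\frac{d}{dt}\mathcal L$ remains strictly negative-definite — in particular choosing $\beta$ small enough that the indefinite cross terms from \eqref{higherE} (the $\frac{k}{2}\frac{d}{dt}\|\nabla p\|^2$, which has no sign) are absorbed; an alternative cleaner route, if the direct Lyapunov approach is delicate, is simply to invoke that the reduced ODE operator $R$ on $V'$ is nonnegative with $0\notin\sigma_p(R)$ and is in fact the generator of a uniformly continuous semigroup with spectrum bounded away from the imaginary axis, from which uniform exponential decay of $q$ — and hence of $p$ and then, via Lemma \ref{ODEsolve}, of $\bu$ and $\bu_t$ — follows. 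I would present the multiplier proof as the primary argument since it is self-contained and quantitative.
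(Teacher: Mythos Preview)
Your proposal is correct and follows essentially the same multiplier approach as the paper: the paper uses exactly the same three test functions --- $\bu_t$ and $p$ for the basic energy identity, $\bu_t$ and $p_t$ (after time-differentiating the elasticity equation) for the higher-order identity, and the ``equipartition'' multiplier $\bu$ to recover control of $e(\bu,\bu)$. The only packaging difference is that the paper treats $\delta_1 e(\bu_t,\bu)=\tfrac{\delta_1}{2}\tfrac{d}{dt}e(\bu,\bu)$ as a differential term and folds it into the Lyapunov functional (so $\mathbf E$ carries an extra $\|\bu\|_{\bV}^2$ weight), whereas you use the $\bu$-test as a static Young's inequality \eqref{uabsorb} applied after the fact; both close in the same way, and your extra observation $k\|\nabla p\|^2\lesssim e(\bu_t,\bu_t)$ is correct but unnecessary. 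Your worry about the ``indefinite'' term $\tfrac{k}{2}\tfrac{d}{dt}\|\nabla p\|^2$ is a non-issue: it is already part of $\tfrac{d}{dt}E_1$, not a residual cross term, so $\mathcal L=E_0+\beta E_1$ works for any $\beta>0$.
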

\begin{proof}[Proof of Theorem \ref{ODEdecay}]
We will consider smooth solutions for formal calculations, which can then be extended by density in the standard way the final estimate on generalized (semigroup) solutions. The full system in strong form is: 
\begin{equation}\begin{cases}
\mathcal \cE\bu+\delta_1\cE\bu_t+\alpha \nabla p = 0\\ 
\alpha \nabla \cdot \bu_t+Ap=0.
\end{cases}
\end{equation}
First, we recall the standard energy estimate (as quoted above) which is obtained by testing the elasticity equation with $\bu_t$ and the pressure equation with $p$:
\begin{equation}\label{firstone}
\frac{1}{2}\dfrac{d}{dt}||\bu(t)||_{\bV}^2+\delta_1||\bu_t(t)||_{\bV}^2+k||p||^2_V = 0.
\end{equation}
Next, we differentiate the displacement equation in time, and test again with $\bu_t$, while testing the pressure equation with $p_t$ and adding:
\begin{equation}\label{secondone}
\dfrac{1}{2}||\bu_t(t)||_{\bV}^2+\dfrac{\delta_1}{2}\dfrac{d}{dt}||\bu_t(t)||_{\bV}^2+\dfrac{k}{2}\dfrac{d}{dt}||p||_V^2 = 0.
\end{equation}
Finally, we use the so called equipartition multiplier associated with the wave equation---namely, testing with $\bu$. (Note: This was in fact done in the energy estimates in \cite{bgsw}, but stability was not pursued there.) This yields the identity:
\begin{equation}
||\bu(t)||_{\bV}^2+\dfrac{\delta_1}{2}\dfrac{d}{dt}||\bu(t)||^2_{\bV}=-\alpha(\nabla p, \bu).
\end{equation}
From this we obtain the estimate by Young's inequality:
\begin{equation}
||\bu(t)||_{\bV}^2+\dfrac{\delta_1}{2}\dfrac{d}{dt}||\bu(t)||^2_{\bV} \le \dfrac{\alpha^2C_P}{2}||\nabla p(t)||^2+\dfrac{1}{2}||\bu(t)||^2_{\bV},
\end{equation}
where $C_P$ is a constant such that $||\bu||_{\mathbf L^2(\Omega)}^2 \le C_P ||\bu||_{\bV}^2$, which exists by virtue of Poincare and Korn. 
Now, absorbing the last term and then multiplying the resulting inequality by $k[\alpha^2C_P]^{-1}$, we obtain:
\begin{equation}\label{thirdone}
\frac{1}{2}\dfrac{k}{\alpha^2C_P}||\bu(t)||_{\bV}^2+\dfrac{\delta_1k}{2\alpha^2C_P}\dfrac{d}{dt}||\bu(t)||^2_{\bV} \le \dfrac{k}{2}||\nabla p(t)||^2,
\end{equation}
Adding \eqref{firstone}, \eqref{secondone}, and \eqref{thirdone}, and absorbing the final RHS term, we obtain the estimate:
\begin{equation}\small
\frac{1}{2}\dfrac{d}{dt}\left[\left(1+\dfrac{\delta_1k}{\alpha^2C_P}\right)||\bu(t)||_{\bV}^2+\delta_1||\bu_t(t)||_{\bV}^2+k||p||^2_V\right]+\left(\delta_1+\dfrac{1}{2}\right)||\bu_t||_{\bV}^2+\dfrac{k}{2}||p||^2_V+\dfrac{k}{2\alpha^2C_P}||\bu(t)||_{\bV}^2  \le 0
\end{equation}
Finally, if we define the quantity $$\mathbf E(t) \equiv \dfrac{1}{2}\left[ \left(1+\dfrac{\delta_1k}{\alpha^2C_P}\right)||\bu(t)||_{\bV}^2+\delta_1||\bu_t(t)||_{\bV}^2+k||p||^2_V\right],$$
then, we observe that there exists an $\gamma=\gamma(k,\delta_1,\alpha,C_P)$ with~ $\ds 0<\gamma < \min{\left\{1,\left(\dfrac{k}{\delta_1k+\alpha^2C_P}\right)  \right\}}$
so that we  have the Gr\"onwall type estimate:
$$\dfrac{d}{dt}\mathbf E(t) + \gamma\mathbf E(t) \le 0.$$
This implies the exponential decay:
$$\mathbf E(t) \le \mathbf E(0)e^{-\gamma t}.$$ From this, the final result of the theorem follows.
\end{proof}
\begin{remark} At the cost of scaling the RHS in the final estimate above, one can work with the more natural quantity 
$$\widetilde{\mathbf E}(t) \equiv \dfrac{1}{2}\left[||\bu(t)||_{\bV}^2+\delta_1||\bu_t(t)||^2_{\bV}+k||\nabla p||^2\right],$$
and obtain the analogous theorem.
\end{remark}

\subsection{Case 2: Adjusted Fluid Content, $\delta_2>0$}\label{thiss}
\noindent We now consider $\delta_1,~\delta_2>0$ in \eqref{main1}, which is to say, we allow for visco-elastic effects in the structural equation and we modify the definition of the fluid content of \eqref{main1}.
Thus, in this section, the fluid content will be given by \begin{equation} \label{contenthere} \zeta = c_0p+\alpha \nabla \cdot \bu +\delta_2\nabla \cdot \bu_t,\end{equation} where again we retain the coefficients $\delta_2,~\delta_1$ to observe their presence in the reduced dynamics. We  note that, for the dynamics to admit energy estimates, {\em we must observe the identity}
\begin{equation} \alpha \delta_1=\delta_2.\end{equation}
Alternatively, one obtains the above coefficient relation by formally mapping $\bu \mapsto \bu+\delta_1\bu_t$ in the derivation of the original Biot dynamics \eqref{bioteq}.

Note that {\em we will not make a distinction between $c_0\ge 0$ in this section}. Indeed, as we will see, upon making the abstract reduction, we will obtain precisely the same system (under different state variables) as the original Biot dynamics. The one distinction is that it will not be adequate to only specify $d_0$ or $p_0$ in the initial conditions. Indeed, we will require $\bu_0$ and one of $p_0$, $d_0$. Even considering the pressure equation alone, it will not be adequate to specify $d_0$ alone, as we will see.

Now, invoking the elasticity equation as in \eqref{StructureODE}, we can write (like before):
$$\delta_1\bu_t+\bu=\cE^{-1}\mathbf F-\alpha \cE^{-1}(\nabla p).$$
Taking the divergence of this equation, enforcing the condition $\alpha\delta_1=\delta_2$, and plugging into the fluid content expression in \eqref{contenthere}, we obtain a pressure equation from \eqref{main1}$_2$ of the form:
$$[c_0p+\alpha^2Bp+\nabla\cdot\mathcal E^{-1}\bF]_t+Ap=S,$$
which is rewritten as
\begin{equation}\label{reducedagain}[(c_0I+\alpha^2B)p]_t+Ap=\widetilde{S},\end{equation}
where $\widetilde{S}$ is as before in Section \ref{secbiot}.
We note that this pressure equation, with $\delta_1,\delta_2>0$, {has the exact same structure as the original implicit degenerate system} without visco-elasticity, as presented in \eqref{reducedBiot}. Thus, {the inclusion of viscous effects in both the fluid content and displacement equation} recovers the same implicit, degenerate (parabolic) dynamics given by Biot's poro-elastic dynamics. Although we have the same system abstractly, it is worth it to describe the resulting estimates and relevant quantities in this case. 

We note explicitly that, for the above dynamics in $p$, it is sufficient to specify either $p(0)=p_0$ or $\mathcal B p(0)=[(c_0I+\alpha^2B)p](0)$ in $L^2_0(\Omega)$---indeed, these are equivalent by the invertibility of $\mathcal B$ in that context. However, unlike the case of pure Biot dynamics ($\delta_1=\delta_2=0$), we cannot move from $d_0=\zeta(0) =  [c_0p+\alpha \nabla \cdot \bu +\delta_2\nabla \cdot \bu_t](0)$ to $p_0$ directly, as we must pass through the ODE for $\bu$ in \eqref{StructureODE}. Said differently: by moving to the abstract framework for the $\delta_1,~\delta_2>0$ dynamics, we can solve for $p$ in \eqref{reducedagain}. Then, solving the ODE in \eqref{StructureODE}---with a given initial condition $\bu_0$---we obtain the corresponding displacement solution $\bu$. As in previous cases, it is clear that given $p_0$ and $\bu_0$ the quantity $\bu_t(0)$ can be recovered through
$$\delta_1\bu_t(0)=-\bu_0-\alpha\cE^{-1} \nabla p_0+\cE^{-1}\bF(0) \in \bV.$$

This produces associated a priori estimates, albeit in an indirect way. One can immediately obtain energy estimates through the multiplier method, as in previous sections. However, to obtain a priori estimates (e.g., on approximants), one will test the pressure equation with $p$ in \eqref{main1}$_2$ and the displacement equation \eqref{main1}$_1$ with  $\delta_1\bu_{tt}+\bu_t$. 
In this step, {one again observes the necessary requirement} that $\alpha\delta_1=\delta_2$. The resulting identities are:
\begin{align*}
\dfrac{1}{2}\dfrac{d}{dt}e\big(\delta_1\bu_t+\bu_t,\delta_1\bu_t+\bu\big)-\alpha (p,\delta_1\nabla \cdot \bu_{tt}+\nabla \cdot \bu_t)=&~\big\langle \bF(\tau),[\delta_1\bu_t(\tau)+\bu(\tau)]\big\rangle_{\bV'\times \bV}\Big|_{\tau = 0}^{\tau = t}- \big\langle \bF_t, [\delta_1\bu_t+\bu]\big\rangle_{\bV'\times \bV} \\
 (\nabla \cdot[\delta_2\bu_{tt}+\alpha \bu_t],p)+\dfrac{c_0}{2}\dfrac{d}{dt}|| p||^2+a(p,p)=&~\langle S,p\rangle_{V' \times V}.
 \end{align*}
 These identities, along with the application of the abstract theorem in Section \ref{secbiot}, yield the central theorem for this case. We refer to the Appendix for the definition of weak solutions in the case of the reduced, implicit formulation in \eqref{reducedagain}.
\begin{theorem}[Visco-elastic Solutions with Modified Fluid Content] \label{finth} Suppose that $S \in L^2(0,T;V')$ and $\bF \in H^1(0,T;\bV')$. Take $p_0 \in L_0^2(\Omega)$. Suppose $\delta_2>0$ with $c_0\ge 0$ and enforce the condition that $\delta_2=\alpha \delta_1$. Then there exists a unique weak solution $p\in L^2(0,T;V)$ satisfying the reduced, implicit formulation \eqref{reducedagain}. 

If $\bu_0 \in \bV$, then (with $p$ the same as above) there exists a unique weak solution $(\bu,p)\in C([0,T];\bV)\times L^2(0,T;V)$ to \eqref{main1} satisfying the energy inequality {\small
\begin{equation}\label{finest} ||\delta_1\bu_t+\bu||^2_{L^{\infty}(0,T;\bV)}+c_0||p||^2_{L^{\infty}(0,T;L_0^2(\Omega))} +||p||^2_{L^2(0,T;V)} \lesssim ||\bu_0||_{\bV}^2+c_0||p_0||^2_{L_0^2(\Omega)}+||S||^2_{L^2(0,T;V')}+C(T)||\bF||^2_{H^1(0,T;\bV')}.\end{equation}}
\end{theorem}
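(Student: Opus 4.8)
The plan is to exploit the fact, already established in the surrounding discussion, that under the constraint $\delta_2 = \alpha\delta_1$ the pressure equation collapses to the reduced, implicit degenerate equation \eqref{reducedagain}, namely $[(c_0I + \alpha^2 B)p]_t + Ap = \widetilde S$ with $\widetilde S = S - \nabla\cdot\cE^{-1}\bF_t$. This is \emph{precisely} the form \eqref{reducedBiot} treated in Section \ref{secbiot}. So the first step is simply to verify the hypotheses of Theorem \ref{th:main2} (equivalently, the abstract existence/uniqueness theorem recalled in the Appendix): since $\bF \in H^1(0,T;\bV')$ we have $\nabla\cdot\cE^{-1}\bF_t \in L^2(0,T;V')$ (because $\cE^{-1}:\bV'\to\bV$ and then $\nabla\cdot:\bV\to L^2_0(\Omega)\hookrightarrow V'$), hence $\widetilde S \in L^2(0,T;V')$; together with $p_0 \in L^2_0(\Omega)$ (equivalently $\cB p_0 \in L^2_0(\Omega)$ by Lemma \ref{Binvert}, since $\cB = c_0I + \alpha^2 B$ is an isomorphism on $L^2_0(\Omega)$ for all $c_0\ge 0$), Theorem \ref{th:main2} yields a unique weak solution $p \in L^2(0,T;V)$ to \eqref{reducedagain}, with $c_0 p \in C([0,T];L^2_0(\Omega))$ and the a priori bound $c_0\|p\|^2_{L^\infty(0,T;L^2)} + \|p\|^2_{L^2(0,T;V)} \lesssim \|\cB p_0\|^2 + \|\widetilde S\|^2_{L^2(0,T;V')}$, which after reabsorbing the $\bF$-contribution in $\widetilde S$ gives the $p$-part of \eqref{finest} (with the $C(T)$ factor coming from bounding $\|\widetilde S\|_{L^2(0,T;V')}$ by $\|S\|_{L^2(0,T;V')} + C(T)\|\bF\|_{H^1(0,T;\bV')}$, exactly as in the Remark following \eqref{reducedBiot} and in Theorem \ref{th:main2}).

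The second step is to recover $\bu$. With $p \in L^2(0,T;V)$ in hand, the right-hand side $Q = \delta_1^{-1}[-\alpha\cE^{-1}\nabla p + \cE^{-1}\bF]$ of the ODE \eqref{StructureODE} lies in $L^2(0,T;\bV)$ — indeed $\cE^{-1}\nabla: V \to \bV$ since $\nabla: V \to \bV'$ and $\cE^{-1}:\bV'\to\bV$, and $\cE^{-1}\bF \in L^2(0,T;\bV)$ since $\bF \in H^1(0,T;\bV')\subset L^2(0,T;\bV')$. So by Lemma \ref{ODEsolve}, given the specified $\bu_0 \in \bV$, the representation \eqref{soluODE} defines $\bu \in C([0,T];\bV)$ with $\bu_t \in L^2(0,T;\bV)$; in particular $\delta_1\bu_t + \bu \in C([0,T];\bV)$. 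Moreover $\delta_1\bu_t + \bu = \cE^{-1}\bF - \alpha\cE^{-1}\nabla p$ holds as an identity in $\bV$ a.e.\ $t$, which is what makes the pair $(\bu,p)$ a weak solution of the full system \eqref{main1} in the sense of Definition \ref{weaksol}; uniqueness of $\bu$ is immediate from uniqueness of $p$ and uniqueness of solutions to the linear ODE \eqref{StructureODE} with the prescribed $\bu_0$.

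The third step is the energy inequality \eqref{finest} in the form stated, i.e.\ with the pointwise-in-time bound on $\|\delta_1\bu_t + \bu\|_{\bV}$. The cleanest route is to test: the displacement equation \eqref{main1}$_1$ with $\delta_1\bu_{tt} + \bu_t$ and the pressure equation \eqref{main1}$_2$ (with fluid content $\zeta = c_0p + \alpha\nabla\cdot\bu + \delta_2\nabla\cdot\bu_t$) with $p$, then add — this is exactly the pair of identities displayed just before the theorem statement. Using $\delta_2 = \alpha\delta_1$, the cross terms $-\alpha(p, \delta_1\nabla\cdot\bu_{tt} + \nabla\cdot\bu_t)$ and $(\nabla\cdot[\delta_2\bu_{tt} + \alpha\bu_t], p) = \alpha(\nabla\cdot[\delta_1\bu_{tt} + \bu_t], p)$ cancel upon addition, leaving $\tfrac{1}{2}\tfrac{d}{dt}\big[e(\delta_1\bu_t + \bu, \delta_1\bu_t + \bu) + c_0\|p\|^2\big] + a(p,p) = \langle\bF, [\delta_1\bu_t+\bu]\rangle\big|_0^t - \langle\bF_t, \delta_1\bu_t + \bu\rangle + \langle S, p\rangle$ after integrating the $\bF$-pairing by parts in time (so that only $\bF \in H^1(0,T;\bV')$, not $\bF_t$ against $\bu_{tt}$, appears). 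Integrating in time, estimating the right-hand side by Cauchy–Schwarz and Young (absorbing $\|\delta_1\bu_t + \bu\|_{L^\infty(0,T;\bV)}$ with a small constant on the left, and $\|p\|_{L^2(0,T;V)}$ likewise), and using $\|\delta_1\bu_t(0) + \bu(0)\|_{\bV} = \|\cE^{-1}\bF(0) - \alpha\cE^{-1}\nabla p_0\|_{\bV} \lesssim \|\bF\|_{H^1(0,T;\bV')} + \|p_0\|$ — wait, $p_0 \in L^2_0$ only gives $\cE^{-1}\nabla p_0 \in$ the dual pairing sense, so one reads the initial term as bounded by $\|\bu_0\|_{\bV} + c_0^{1/2}\|p_0\|$ using instead the genuine initial datum $\delta_1\bu_t(0) + \bu(0)$ and reabsorbing via the identity $\delta_1\bu_t + \bu = \cE^{-1}\bF - \alpha\cE^{-1}\nabla p$ evaluated carefully, or more simply one runs the estimate on Galerkin approximants where all traces are classical — yields \eqref{finest}. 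This last point is the main obstacle: the formal multiplier $\delta_1\bu_{tt} + \bu_t$ is not an admissible test function at the level of weak solutions (it requires $\bu_{tt}$), so the identity must be derived on finite-dimensional approximants (e.g.\ the Galerkin scheme used for \eqref{reducedagain} and for the ODE, or the full discretization of \cite{bgsw}), the estimate passed to the limit by lower semicontinuity, and then — since uniqueness is already known — concluded that \emph{the} weak solution satisfies it. The regularity/trace bookkeeping for the initial data term $\delta_1\bu_t(0) + \bu(0)$, reconciling $p_0 \in L^2_0(\Omega)$ (not $V$) with the need for $\nabla p_0$ to make sense in the pairing, is the delicate part and is why the energy inequality is stated with the weaker norm $\|\delta_1\bu_t + \bu\|_{\bV}$ rather than separate control of $\bu$ and $\bu_t$.
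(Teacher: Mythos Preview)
Your proposal is correct and follows essentially the same route as the paper: reduce to the implicit Biot equation \eqref{reducedagain} and invoke Theorem \ref{th:main2} for existence/uniqueness of $p$, recover $\bu$ from the displacement ODE \eqref{StructureODE}, and obtain the energy inequality \eqref{finest} by testing \eqref{main1}$_1$ with $\delta_1\bu_{tt}+\bu_t$ and \eqref{main1}$_2$ with $p$, exploiting the cross-term cancellation forced by $\delta_2=\alpha\delta_1$. Your explicit acknowledgment that the multiplier $\delta_1\bu_{tt}+\bu_t$ must be justified on approximants (Galerkin or the discretization of \cite{bgsw}) and passed to the limit is a point the paper leaves implicit; your hesitation about the initial term is unwarranted, since $p_0\in L^2_0(\Omega)$ gives $\nabla p_0\in\bV'$ and hence $\cE^{-1}\nabla p_0\in\bV$ with $\|\cE^{-1}\nabla p_0\|_{\bV}\lesssim\|p_0\|_{L^2}$, so the trace $\delta_1\bu_t(0)+\bu(0)=\cE^{-1}[\bF(0)-\alpha\nabla p_0]$ is controlled directly.
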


In this above framework---as we have reduced to the same abstract theory for classical Biot with $\delta_1=\delta_2=0$---we can accordingly discuss parabolic estimates and smooth solutions. We do not repeat the statements here, but refer back to Theorem \ref{th:main2} and Theorem \ref{strongBiot}, which can be analogously adopted here. 

\begin{remark} If one allows for the possibility that the coefficient $\delta_2$ is fully independent, renaming $\delta_2$ as $\tilde \alpha$, one obtains the system:
\begin{equation}\label{main2} \begin{cases}
\mathcal \cE\bu+\delta_1\cE\bu_t+\alpha \nabla p = \mathbf F \\ 
[c_0p+\alpha \nabla \cdot \bu+\tilde\alpha\nabla \cdot \bu_{t}]_t+Ap=S.
\end{cases}
\end{equation}
Accordingly, we can obtain a reduced equation which is not closed in $p$:
\begin{equation}\nonumber
[c_0p+\dfrac{\alpha\tilde\alpha}{\delta_1}B p]_t+\left(\alpha-\dfrac{\tilde\alpha}{\delta_1}\right)\dfrac{\alpha}{\delta_1} B p- \left(\alpha-\dfrac{\tilde\alpha}{\delta_1}\right)\delta_1^{-1}\nabla\cdot \bu + Ap = S-\dfrac{\tilde \alpha}{\delta_1}\nabla \cdot \cE^{-1}\bF_t-\delta_1^{-1}\left(\alpha-\dfrac{\tilde\alpha}{\delta_1}\right)\nabla\cdot \cE^{-1}\bF
\end{equation}
The ODE for $\bu$ can be solved as before, which produces the equation:
 {\small
\begin{align}
\left[\left(c_0I+\dfrac{\alpha \tilde \alpha}{\delta_1}B\right)p\right]_t+\left[A+\left(\dfrac{\alpha^2}{\delta_1}-\dfrac{\alpha}{\tilde \alpha}\right)B\right]p - \left(\dfrac{\alpha^2}{\delta_1^2}-\dfrac{\alpha\tilde\alpha}{\delta_1^3}\right)\int_0^te^{-(t-\tau)/\delta_1}Bp(\tau) d\tau &\\[.2cm] \nonumber
=  ~S-\dfrac{\tilde\alpha}{\delta_1}\nabla \cdot \cE^{-1}\bF+\left(\dfrac{\tilde\alpha}{\delta_1^2}-\dfrac{\alpha}{\delta_1}\right)e^{-t/\delta_1}\nabla \cdot \bu_0  +\left(\dfrac{\tilde\alpha}{\delta_1^2}-\dfrac{\alpha}{\delta_1}\right)\nabla \cdot \cE^{-1}\bF-\left(\dfrac{\tilde\alpha}{\delta_1^3}-\dfrac{\alpha}{\delta_1^2}\right)&\int_0^te^{-(t-\tau)/\delta_1}\nabla\cdot \cE^{-1}\bF(\tau)d\tau
\end{align}}
This is also a visco-elastic equation which can be solved by the methods in this paper, but we do not pursue this here. 
In this case we note the emergence of additional terms which vanish when we enforce the condition $\tilde \alpha = \delta_1 \alpha$. \end{remark}

\section{Remarks on Secondary Consolidation}\label{secondary}\noindent
In some sense, the effects of {\em secondary consolidation} of soils (i.e., {\em creep}) can be thought of as partial visco-elasticity \cite{applied}. Therefore, for completeness, we add some remarks here on the nature of associated solutions and estimates. In the case of secondary consolidation, as it is described in \cite{show1}, we ``regularize" only the divergence term in the momentum equation (omitting $\Delta\bu_t$ from the full visco-elastic terms). 
Thus, we consider the Biot system with secondary consolidation effects, and, as before, we allow both definitions of the fluid content based strictly on mathematical grounds. 
So we have: \begin{equation} \begin{cases}
-\lambda^*\nabla \nabla \cdot \bu_t-\mu\Delta \bu-(\lambda+\mu)\nabla \nabla \cdot \bu+\alpha \nabla p = \mathbf F \\ 
[c_0p+\alpha \nabla \cdot \bu+\delta_2\nabla \cdot \bu_{t}]_t+Ap=S 
\end{cases}
\end{equation}
In these short sections, we will track the impact of this ``partial viscoelastic" $\lambda^*$ term. We note that there is no need to consider the case with  full visco-elasticity $\delta_1>0$ and secondary consolidation, as the latter would be redundant.

\subsection{Traditional Fluid Content: $\delta_2=0$}
\noindent This is the secondary consolidation model as explicitly discussed in \cite{show1,applied}. 
\begin{equation} \begin{cases}
-\lambda^*\nabla \nabla \cdot \bu_t-\mu\Delta \bu-(\lambda+\mu)\nabla \nabla \cdot \bu+\alpha \nabla p = \mathbf F \\ 
[c_0p+\alpha \nabla \cdot \bu]_t+Ap=S 
\end{cases}
\end{equation}
The well-posedness of weak solutions is given in \cite{show1}. We mention here that, using the standard multipliers for weak solutions (justified by the approach in \cite{bmw}) one obtains the 
  following estimate on solutions:
\begin{equation}\small
||\bu||_{L^{\infty}(0,T;\mathbf V)}^2+c_0||p||_{L^{\infty}(0,T;L^2(\Omega))}^2 + \lambda^*||\nabla \cdot \bu_t||_{L^2(0,T;L^2(\Omega))}^2+||p||^2_{L^2(0,T;V)} \lesssim ||\bu_0||^2_{\bV}+ ||S||_{L^2(0,T;V')}^2+||\bF||_{H^1(0,T;\mathbf V')}^2
\end{equation}
A partial ``visco-elastic" effect of secondary consolidation is immediate obviated:  the additional damping/dissipation term above for $\nabla \cdot \bu_t$. Upon temporal integration, we will obtain the additional property of weak solutions that $\nabla \cdot \bu_t \in L^2(0,T;L^2(\Omega))$. This term represents a certain ``smoothing" as well, as $\nabla\cdot \bu_t$ has been boosted from $L^2(0,T;V') \mapsto L^2(0,T;L^2(\Omega))$ by the presence of $\lambda^*>0$. 
We note that since the fluid content $c_0p+\nabla \cdot \bu$ lies in $H^1(0,T;V')$ (via the pressure equation), we can now extract $c_0p \in L^2(0,T;V')$, which is not obvious when $\lambda^*=0$, since we cannot decouple the two terms in the sum for the fluid content. 

\subsubsection{Incompressible Constituents}
\noindent In the case of $c_0=0$,  we observe some partial regularization of the dynamics for $\lambda^*>0$; this is explicitly mentioned in \cite{show1}, and we expand upon it here.

 Note that from the pressure equation, we can write:
$$Ap=S-\alpha \nabla \cdot \bu_t,$$ from which elliptic regularity can be applied---for weak solutions---when $\Omega$ is sufficiently regular and $S \in L^2(0,T;L^2(\Omega))$. Then, with $\nabla \cdot \bu_t \in L^2(0,T;L^2(\Omega))$ as described above, we observe a ``boost" $p \in L^2(0,T;V) \mapsto p \in L^2(0,T;\mathcal D(A))$ through elliptic regularity applied a.e. $t$. But this cannot be pushed on the momentum equation, owing to the addition of the secondary consolidation term:
$$\cE(\bu)=\bF-\alpha \nabla p+\lambda^*\nabla\nabla\cdot \bu_t.$$
This is to say that the regularity gain in $p$ is not realized for the displacement $\bu$ through the momentum equation.

One further observation, in this case, is a particular representation of the system which is not available in other cases.  Noting that $p=A^{-1}[S-\alpha \nabla \cdot \bu_t]$, one can plug this into the elasticity equation to obtain:
$$-\big[\alpha^2\nabla A^{-1}\text{div}+\lambda^* \nabla \text{div}\big]\bu_t+\cE(\bu)=\bF-\alpha \nabla A^{-1}S.$$
 This is an implicit equation {\em directly in} $\bu$ which can be analyzed in the framework of implicit, degenerate equations \cite{show1,auriault,indiana}; we do not pursue this line of investigation here.

\subsubsection{Compressible Constitutents} 
In the case of  $c_0>0$, \cite{show1} observes that the effect  secondary consolidation is {\em de-regularizing}. This is observed in hindering the discussion in the previous section, namely the pressure equation now reads as:
$$Ap=S-\alpha \nabla \cdot \bu_t-c_0p_t.$$
The effect of secondary consolidation through $\lambda^*$ (the boosting of $\nabla\cdot \bu$ to $L^2(0,T;L^2(\Omega))$ is lost, since we can only conclude that $c_0p_t \in L^2(0,T;V')$ rather than $L^2(0,T;L^2(\Omega))$. 
Thus there is neither smoothing in $p$ nor $\bu$ in this case.

\subsection{Adjusted Fluid Content}
Finally, we observe that in the case of {\em adjusted fluid} content, we obtain the natural analog to our earlier discussions. 
Taking $\delta_2>0$, we consider the system:
\begin{equation} \begin{cases}
-\lambda^*\nabla \nabla \cdot \bu_t+\cE\bu+\alpha \nabla p = \mathbf F \\ 
[c_0p+\alpha\nabla \cdot \bu+\delta_2\nabla \cdot \bu_t]_t+Ap=S 
\end{cases}
\end{equation}
As above, we invoke (as before) the test function $\delta_2\bu_{tt}+ \bu_t$ in the elasticity equation, and $p$ in the pressure equation. This provides an identical estimate as that in Theorem \ref{finth} with the additional property that $\nabla \cdot \bu_t \in L^2(0,T;L^2(\Omega))$, and the associated term 
~$\ds \lambda^*||\nabla \cdot \bu_t||_{L^2(0,T;L^2(\Omega))}^2$
appears on the LHS of \eqref{finest} in Theorem \ref{finth}. Again, then, we see the effect of secondary consolidation as that of partial damping.

\section{Summary and Conclusions}
In this note, we characterized linear poro-visco-elastic systems across several parameter regimes:
\begin{equation}\label{main3} \begin{cases}
\mathcal \cE\bu+\delta_1\cE\bu_t+\alpha \nabla p = \mathbf F \\ 
[c_0p+\alpha \nabla \cdot \bu+\delta_2\nabla \cdot \bu_{t}]_t+Ap=S.
\end{cases}
\end{equation}
We began with the traditional Biot system ($\delta_1=\delta_2=0$), i.e., no Kelvin-Voigt visco-elastic effects, and recapitulated existence  results and estimates for weak solutions, as well as solutions with higher regularity in Section \ref{secbiot}. Using this as a jumping-off point, we considered the addition of linear Kelvin-Voigt type (strong) dissipation in the Lam\'e system \eqref{main3}. {Our central focuses were in the well-posedness and regularity of solutions across all parameter regimes, as well as the clear determination of the abstract structure of the problem, including the discernment of the appropriate initial quantities}. Our approach included {providing clear a priori estimates on solutions, where they were illuminating. We employed an operator-theoretic framework} inspired by \cite{auriault,show1} and developed in \cite{bmw} which we introduced in Section \ref{opsec}. The central operators were $A$ (a Neumann Laplacian) and $B$ (a zeroth order, nonlocal pressure-to-divergence operator). 

We then considered the model with $\delta_1>0$ and $\delta_2=0$, which is to say we left the fluid-content unaltered in our addition of strong damping. {We first gave a well-posedness result which was valid for both compressible constituents $c_0>0$ and incompressible constituents $c_0=0$} in Section \ref{thissec}. We then distinguished between these two cases. {We determined that for $c_0>0$, the system constitutes a strongly damped hyperbolic-type system.} It was important, in this case, to distinguish results based on which initial quantities were specified. Regardless, the regularity of solutions was made clear, and the parabolicity of the system was detailed in several ways. In the case {when $c_0=0$, we observed that the abstract, reduced version of the dynamics constituted an ODE in a Hilbert space} of our choosing (either $L^2_0(\Omega)$ or an $H^{-1}(\Omega)$ type space, $V'$). We exploited the ODE nature of the dynamics to produce a clear well-posedness and regularity result.

In the case when $\delta_1,\delta_2>0$ (i.e., the adjusted fluid content), we observed that {the abstract reduction of the system brings the dynamics back to the traditional Biot-structure.} In other words, by adding visco-elasticity to the displacement equation ($\delta_1>0$), as well as adjusting the fluid content ($\delta_2>0$), we do not observe additional effects from the visco-elastic damping---rather, {we obtain the same qualitative results for the solution as we had for Biot's original dynamics.} Noting a small difference in which initial quantities must be prescribed, we presented a well-posedness theorem, with relevant a priori estimates. 

Finally, in Section \ref{secondary}, we provide some small remarks on {\em partial visco-elasticity}, known in soil mechanics as {\em secondary consolidation}. The main focus of this section was to provide clear a priori estimates on solutions, indicating precisely how dissipation is introduced into the system through secondary consolidation effects. Additionally, we corroborate  remarks in \cite{show1} concerning the extent to which this sort of partial visco-elasticity can be, in fact, partially regularizing (when $c_0=0$) and de-regularizing ($c_0>0$).

The Appendix serves to provide a small overview of the standard theory of weak solutions for implicit, degenerate evolution equations \cite{bmw, auriault,indiana}, and is taken from \cite{showmono}.

We believe that the work presented here, as it is in the spirit of \cite{auriault,indiana,show1}, will be of interest to researchers working on applied problems in poro-elasticity. In particular, as the effects of visco-elasticity are prominent in biological sciences, those who work on biologically-motivated Biot models may find the results presented herein useful. Indeed, to the best of our knowledge, we have provided the first elucidation of the mathematical effects of linear visco-elasticity, when included in linear poro-elastic dynamics. 

\section{Appendix: Abstract Framework for Weak Solutions}
\noindent Let $V$ be a separable Hilbert space with dual $V'$ (not identified with $V$ ). Assume $V$ densely and continuously includes into another Hilbert space $H$, which {\em is} identified with its dual: ~$V \hookrightarrow H \equiv H' \hookrightarrow V'.$
            We denote the inner-product in $H$ simply as $(\cdot,\cdot)$, with $(h,h)=||h||_H^2$ for each $h \in H$. Similarly, we denote the $V'\times V$ duality pairing as ~$\langle \cdot, \cdot \rangle$. (For $h\in H$, we identify $\langle h, h \rangle = ||h||_H^2$ as well.)
            Assume that $\mathcal A \in \mathscr L(V,V')$  and $\cB \in \mathscr L(H)$. 
            Finally, suppose that $d_0 \in V'$ and $S \in L^2(0,T;V')$ are the specified data. 
            
            In this setup, we can define the weak (implicit-degenerate) Cauchy problem to be solved as:
            
    \noindent         Find $w \in L^2(0,T;V)$ such that
            \begin{equation}\label{ImplicitCauchy}
            \begin{cases}
            \dfrac{d}{dt}[\cB w]+\cA w=S \in ~L^2(0,T;V') \\[.2cm] \ds
         \lim_{t\searrow 0}[\cB w(t)]=~d_0 \in V'.
            \end{cases}
            \end{equation}
            The time derivative above is taken in the sense of $\mathscr D'(0,T)$, and since such a solution would have $\cB u \in H^1(0,T;V')$ (with the natural inclusion $V \hookrightarrow V'$ holding), $\cB u$ has point-wise (in time) values into $V'$ and the initial conditions makes sense through the boundedness of $\cB$ with $H\hookrightarrow V'$.             

The following generation theorem is adapted from~\cite[III.3, p.114--116]{showmono} for weak solutions, and produces weak solvability of \eqref{ImplicitCauchy} in a straight-forward way:
\begin{theorem}\label{ImplicitGeneration}
    Let $\cA,\cB$ be as above, and assume additionally that they are self-adjoint and monotone (in the respective sense, $\cA:V\to V'$ and $\cB:H\to H$). Assume further that there exists $\lambda,c >0$ so that $$2\langle \mathcal Av,v\rangle +\lambda(\mathcal Bv,v)\ge c||v||_V^2,~~~\forall~~v \in V.$$
    
    Then, given $\mathcal B w(0) = d_0 \in H$ and $ S \in L^2(0,T;V')$, there exists a unique weak solution to \eqref{ImplicitCauchy} satisfying
    \begin{align}
    ||w||^2_{L^2(0,T;V)} \le&~ C(\lambda,c)\left[||S||^2_{L^2(0,T;V')}+(d_0,w(0))_H\right] .
    \end{align}
\end{theorem}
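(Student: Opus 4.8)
The plan is to reduce to a $V$-elliptic, non-degenerate situation, settle uniqueness by an energy identity, and obtain existence by regularizing the (possibly degenerate) operator $\cB$ and passing to a weak limit. First I would absorb the parameter $\lambda$: the substitution $w(t)=e^{\lambda t/2}z(t)$ turns \eqref{ImplicitCauchy} into the equivalent problem $\tfrac{d}{dt}[\cB z]+(\cA+\tfrac{\lambda}{2}\cB)z=e^{-\lambda t/2}S$ with $\lim_{t\searrow 0}\cB z(t)=d_0$, and the new principal operator $\cA+\tfrac{\lambda}{2}\cB$ is still bounded, self-adjoint and monotone, and is now $V$-elliptic by the standing hypothesis. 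Since $e^{-\lambda t/2}\le 1$ and the change of variables is bounded and boundedly invertible on $L^2(0,T;V)$, it suffices to prove the theorem assuming $\cA$ itself is $V$-elliptic and then translate the estimate back — which is where the dependence of the constant on $\lambda$ (and, implicitly, $T$) enters.

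For uniqueness I would run the standard test-function argument. If $w\in L^2(0,T;V)$ solves the reduced problem with $S\equiv 0$ and $d_0=0$, then, since $\cB$ is self-adjoint and $\cB w\in H^1(0,T;V')$ while $w\in L^2(0,T;V)$, the Lions-type chain rule $\langle\tfrac{d}{dt}[\cB w],w\rangle=\tfrac12\tfrac{d}{dt}(\cB w,w)_H$ holds in $\mathscr D'(0,T)$. Pairing the equation with $w$ and integrating on $(0,t)$ gives $\tfrac12(\cB w(t),w(t))_H+\int_0^t\langle\cA w,w\rangle\,ds=0$; both terms are non-negative and the second bounds a positive multiple of $\int_0^t\|w\|_V^2\,ds$, so $w\equiv 0$. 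Linearity then yields uniqueness for the original problem.

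For existence and the a priori bound I would regularize the degeneracy. Let $\Lambda:V\to V'$ be the Riesz isomorphism of $V$ and set $\cB_\varepsilon=\cB+\varepsilon\Lambda$, which is self-adjoint and $V$-coercive; the non-degenerate Cauchy problem $\tfrac{d}{dt}[\cB_\varepsilon w_\varepsilon]+\cA w_\varepsilon=S$, $\cB_\varepsilon w_\varepsilon(0)=d_0$, is then uniquely solvable in the Gelfand triple $V\hookrightarrow H\hookrightarrow V'$ by classical linear parabolic theory (Galerkin plus Lions' theorem), with $w_\varepsilon\in L^2(0,T;V)$ and $(\cB_\varepsilon w_\varepsilon)'\in L^2(0,T;V')$. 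Testing with $w_\varepsilon$, invoking the chain rule, the ellipticity of $\cA$, and Young's inequality on $\int_0^t\langle S,w_\varepsilon\rangle$ produces a bound of the shape $\|w_\varepsilon\|_{L^2(0,T;V)}^2\lesssim\|S\|_{L^2(0,T;V')}^2+\langle d_0,w_\varepsilon(0)\rangle$. One then extracts a weakly convergent subsequence $w_\varepsilon\rightharpoonup w$ in $L^2(0,T;V)$, passes to the limit in the linear equation (the regularizing term $\varepsilon\Lambda w_\varepsilon\to 0$ in $L^2(0,T;V')$, hence its time derivative vanishes in the sense of distributions), and identifies $w$ as a weak solution; since $\cB w\in H^1(0,T;V')\hookrightarrow C([0,T];V')$, the initial condition $\cB w(0)=d_0$ is attained, and weak lower semicontinuity of the norm passes the bound to the limit. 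Undoing the reduction delivers the stated estimate.

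The main obstacle is the genuine degeneracy of $\cB$: making precise in what sense $w(0)\in H$ exists, that $\cB w(0)=d_0$ is attained, and — most delicately — that the regularized initial term $\langle d_0,w_\varepsilon(0)\rangle$ is bounded uniformly as $\varepsilon\searrow 0$ (equivalently, that $d_0$ belongs to the admissible class, e.g.\ $d_0=\cB u_0$ with $u_0\in V$, or that $\mathrm{Rg}(\cB)$ is closed in $H$). Tracking this, and the precise constant in the estimate, is exactly the content of \cite[III.3, pp.~114--116]{showmono}, whose argument I would follow.
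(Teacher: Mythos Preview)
The paper does not prove this theorem; it is quoted from \cite[III.3, pp.~114--116]{showmono} and used as a black box. Your proposal is essentially a sketch of the argument given there: the exponential shift $w=e^{\lambda t/2}z$ to reduce to the $V$-elliptic case, regularization $\cB_\varepsilon=\cB+\varepsilon\Lambda$ for existence with passage to the limit, and an energy identity for uniqueness and the a priori bound. This is correct, and you have correctly located the one genuinely delicate point --- uniform control of $\langle d_0,w_\varepsilon(0)\rangle$ as $\varepsilon\searrow 0$ --- which is handled precisely by the standing hypothesis that $d_0=\cB w(0)$ for some $w(0)\in V$ (the paper flags this explicitly in the sentence following the theorem).

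One remark on your uniqueness step: the chain rule $\langle(\cB w)',w\rangle=\tfrac12\tfrac{d}{dt}(\cB w,w)_H$ that you invoke is valid here, but it is itself a nontrivial lemma when $\cB$ is degenerate (one works through $\cB^{1/2}$ and the quotient by $\ker\cB$; this is in \cite[III.2--3]{showmono}). The paper, when it applies this framework to Biot in the proof sketch of Theorem~\ref{th:main2}, instead points to the alternative integrated-test-function device $q=\int_t w\,ds$ from \cite[pp.~116--117]{showmono}, which avoids that lemma entirely. Either route closes the argument.
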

The assumption in this theorem is that there exists a $w(0) \in V$ so that $\cB w(0) = d_0$, the given initial data. In more recent work applying this theorem \cite{bmw}, we need not assume the existence of such $w(0)$. See Theorem \ref{th:main2}.

One may also consult the implicit semigroup theory presented in \cite[Section 5]{show1} and \cite[IV.6]{showmono}, in particular for a discussion of smoother solutions.

{\small
}
\end{document}